\documentclass[11pt,twoside]{amsart}
\usepackage{url}
\usepackage{amssymb,amsthm,amsfonts,amstext,amsmath}
\usepackage{newcent}       
\usepackage{helvet}         
\usepackage{courier}        
\usepackage{graphicx}
\usepackage{enumerate}\usepackage{mathrsfs}
\usepackage[charter]{mathdesign}
\usepackage[colorlinks=true,linkcolor=blue,citecolor=magenta]{hyperref}
\usepackage{bbm}
\usepackage{a4wide}
\usepackage{geometry}
\usepackage{nicematrix}

\usepackage{multicol}
\numberwithin{equation}{section}
\usepackage{color}
\usepackage{pstricks}
\usepackage{float}
\usepackage{mathrsfs}
\usepackage{tikz}
\usepackage{comment}

\newtheorem{theorem}{Theorem}[section]
\newtheorem{lemma}[theorem]{Lemma}
\newtheorem{proposition}[theorem]{Proposition}

\newtheorem{corollary}[theorem]{Corollary}
\newtheorem{remark}[theorem]{Remark}
\newtheorem{definition}{Definition}

\newcommand{\<}{\langle}
\renewcommand{\>}{\rangle}

\usepackage{tikz}
\usepackage{tikz-3dplot}
\usetikzlibrary{calc, shapes, backgrounds,arrows.meta,patterns}
\usetikzlibrary{decorations.markings,positioning}

\def\centerarc[#1](#2)(#3:#4:#5){\draw[#1] ($(#2)+({#5*cos(#3)},{#5*sin(#3)})$) arc (#3:#4:#5);}

\let\oldtocsection=\tocsection
\let\oldtocsubsection=\tocsubsection
\let\oldtocsubsubsection=\tocsubsubsection
\renewcommand{\tocsection}[2]{\hspace{0em}\oldtocsection{#1}{#2}}
\renewcommand{\tocsubsection}[2]{\hspace{1em}\oldtocsubsection{#1}{#2}}
\renewcommand{\tocsubsubsection}[2]{\hspace{2em}\oldtocsubsubsection{#1}{#2}}
\DeclareRobustCommand{\SkipTocEntry}[5]{}

\definecolor{deeppink}{rgb}{1.0, 0.08, 0.58}

\usepackage{hyperref}
\hypersetup{
	colorlinks=true,
	linkcolor=blue,
	filecolor=magenta,      
	urlcolor=magenta,
}

\begin{document}
	
\title[Cutoff profile for the coupon collector with many collections]{Cutoff profile for the coupon collector with many collections}

\author{M. E. Castro}
\address[M. E. Castro]{Undergraduate Program in Statistics, Universidade Federal de Santa Maria, Av. Roraima, 1000, 97105-900, Camobi, Santa Maria, Rio Grande do Sul, Brasil}
\email{maria-eliza.castro@acad.ufsm.br }

\author{R. Marinho}
\address[R. Marinho]{Department of Statistics, Universidade Federal de Santa Maria, Av. Roraima, 1000, 97105-900, Camobi, Santa Maria, Rio Grande do Sul, Brasil}
\email{rodrigo.marinho@ufsm.br}
\urladdr{\url{https://marinhor.weebly.com}}

\begin{abstract}
We solve the problem posed by Erd\H{o}s and Rényi in $1961$ about the limiting distribution of the number of draws $T_{n,m}$ that a coupon collector needs to complete $m$ collections of $n$ distinct types of coupons, when $m$ increases  with $n$.
For fixed $m$, Erd\H{o}s and Rényi proved that $\mathbb P(T_{n,m}\leq n\log n+(m-1)n\log\log n-n\log (m-1)!+\theta n)$ converges to $ \mathcal G(\theta)$, for every $\theta \in \mathbb R$, as $n \to \infty$, where $\mathcal G$ is the standard Gumbel distribution function, and they asked what happens when $m=m(n)$ increases with $n$. We give a complete answer to this question. More precisely, for every increasing sequence $m=m(n)$, we determine explicit sequences $t_{n,m}$ and $w_{n,m}$ satisfying $w_{n,m}/t_{n,m}\to0$ such that $\mathbb P(T_{n,m}\leq t_{n,m}+\theta w_{n,m})\to\mathcal G(\theta)$ for every $\theta\in\mathbb R$. We also determine the corresponding asymptotic behavior of $\mathbb E[T_{n,m}]$.
To prove these results, we present a methodology, inspired by Poissonization, that can be applied to determine the asymptotic behaviors of standardized hitting times of discrete-time stochastic processes.
\end{abstract}

\keywords{Absorbing Markov chains, coupon collector, cutoff phenomenon, poissonization, relative entropy.}

\maketitle

\tableofcontents

\section{Introduction}\label{introduction}

The coupon collector problem dates back at least to Laplace, see his seminal book of 1820~\cite{laplace}. In its classical formulation, there are $n$ distinct types of coupons, and at each step a coupon is drawn independently and uniformly at random from these $n$ types. The collector keeps all the coupons obtained, including repetitions, and the goal is to determine how long it takes until they have collected at least one coupon of each type. Nowadays this problem is a classical example in many introductory probability books and sometimes is modeled as an absorbing Markov chain, see~\cite{peres} for instance.

In 1960, Newman and Shepp mentioned that some time before Walter Weissblum proposed a natural extension of the classical problem: instead of asking how long it takes to collect one complete set of coupons, he asked how long it takes to obtain two complete sets. Observe that for two complete collections, the problem becomes more delicate. It is no longer enough to record whether each coupon type has appeared, since one must keep track of how many times each type has been observed. In~\cite{newman}, Newman and Shepp generalized this small extension to $m$ complete sets and they proved that the number $T_{n,m}$ of coupons that must be drawn so that the collector completes $m$ sets satisfies
\begin{flalign}\label{expectation}
	\mathbb E[T_{n,m}] = n \Big( \log n + (m-1)  \log \log n + C_m + o(1) \Big) ,
\end{flalign}
where $C_m$ is a constant, dependent on $m$, unknown at that moment.
One year later, in~\cite{erdos}, Erdős and Rényi analyzed the same problem and proved that
\begin{flalign}\label{tail}
	\lim_{n \to \infty}\mathbb P \Big( T_{n,m} \leq \left\lfloor  n\,\big( \log n + (m-1) \log \log n - \log (m-1)! +\theta  \big) \right\rfloor   \Big) =  \mathcal G(\theta),\,\,\forall \theta \in \mathbb R,
\end{flalign}
where $\mathcal G$ is the standard Gumbel probability distribution function, that is,
\begin{flalign}\label{gumbel}
	\mathcal G(\theta)&=e^{-e^{-\theta}},\,\,\forall \theta \in \mathbb R.
\end{flalign}
Their result also shows that the constant $C_m$ in \eqref{expectation} equals $\gamma-\log (m-1)!$ where $\gamma$ is the Euler-Mascheroni constant defined as
\begin{flalign}\label{euler}
	\gamma=\int_0^1 \frac{1-e^{-t}}{t} dt - \int_1^{\infty} \frac{e^{-t}}{t} dt = 0.5772\ldots\,.
\end{flalign}

All the results above are valid only when $m$ is fixed, and at the end of their paper, Erdős and Rényi remark that
\begin{center}
	``It is an interesting problem to investigate the limiting distribution of $T_{n,m}$ when $m$ increases together with $n$, but we can not go into this question here.''
\end{center}
In this work we present an answer to this problem. However, it is important to mention that while it was in progress, Long~\cite{long} posted a preprint claiming to have obtained the limiting distribution theorem for $T_{n,m}$ when $m$ grows with $n$. Our work was developed independently, and partial results have been presented since 2025, including at the XVII CLAPEM in March 2026, before that preprint appeared. We discovered the existence of that preprint after the completion of this manuscript.

The coupon collector problem is interesting not only because of its history. If we put some attention in (\ref{tail}) we can see that it asserts that, when $m$ is fixed, as $n\to \infty$, the probability $\mathbb P (T_{n,m}> \ell)$ converges abruptly from $1$ to $0$ at $\ell=n\,\big( \log n + (m-1) \log \log n - \log (m-1)!\big)$, a phenomenon called \textit{cutoff} in the literature. Furthermore, their result also shows that this abrupt convergence happens within a window of order $n$ and presents a standard Gumbel profile.

In this paper, we prove the presence of this cutoff phenomenon when $m$ increases with $n$ and we determine the the cutoff window and profile. As in~\cite{long}, we define the cutoff time $t_{n,m}$ through a Gamma$(m,1)$ distribution quantile. However, in~\cite{long}, the window $w_{n,m}$ is defined as $1/f_{m}(t_{n,m}/n)$, where $f_m$ denotes the density of the Gamma. Using a different methodology, we obtain explicit expressions for the cutoff window $w_{n,m}$, which do not require the knowledge of $t_{n,m}$, in the regimes $m=o(\log n)$, $m/\log n\to\alpha\in(0,\infty)$ and $\log n=o(m)$. More specifically, the expressions for the window $w_{n,m}$ are given in terms of the real branch $W_{-1}$ of the Lambert $W$ function. Furthermore, our arguments show the asymptotic behavior of $t_{n,m}$ and $w_{n,m}$ in each of these regimes.  Although the time scales change with the growth of $m$, the cutoff profile remains $\mathcal G$. As a consequence, we also derive the expansion $\mathbb E[T_{n,m}]=t_{n,m}+\gamma w_{n,m}+o(w_{n,m})$. 

Another contribution of this work is the definition of poissonization for general discrete-time stochastic processes and the attainment of a result that we call \textit{Depoissonization Lemma}. In the literature, poissonization is the general procedure of replacing a deterministic discrete parameter by an independent Poisson random variable, and it has been applied to the analysis of many variations of the coupon collector problem (see~\cite{papanicolaou2,holst} and~\cite{ilienko} for instance). In this work, given a discrete-time stochastic process $\{X_\ell ; \ell \in \mathbb N_0\}$ and a Poisson process with intensity $1$, $\{ N(t) ; t \geq 0 \}$, independent of $\{X_\ell ; \ell \in \mathbb N_0\}$, we say that a stochastic process $\{ Y_t; t \geq 0\}$ is the poissonization of $\{X_\ell ; \ell \in \mathbb N_0\}$ if it is defined as $Y_t= X_{N(t)}$ for every $t \geq 0$. In particular, if $\{X_\ell ; \ell \in \mathbb N_0\}$ is a finite discrete-time Markov chain with transition matrix $P$, then its poissonization is a continuous-time Markov chain with generator $P -I$ and it should preserve the expectation of a hitting time.
The so-called Depoissonization Lemma allows us to analyze the limiting distribution of standardized hitting times of discrete-time stochastic processes in the continuous-time setting by using its poissonization. The combination of this procedure with the Depoissonization Lemma brings a new methodology with it, that we use to analyze the coupon collector problem with many collections. We believe that this method can be applied to many other problems. 

The paper is organized as follows: In Section~\ref{def and res} we construct a discrete-time absorbing Markov chain which is absorbed at time $T_{n,m}$, we present the definitions of cutoff, cutoff window and cutoff profile, and we enunciate the result which answers the question posed by Erdős and Rényi, see Theorem~\ref{main result}. In Section~\ref{poissonization} we define the poissonization of a discrete-time stochastic process and we prove that not only it preserves expectations of hitting times, but it can also be used, through the Depoissonization Lemma, to analyze the limiting distributions of such stopping times in the continuous-time setting. This section may be of independent interest for the reader and can be read separately. In Section~\ref{main arg} we analyze the poissonization of the discrete-time Markov chain previously constructed and we use it, with the Depoissonization Lemma, to prove Theorem~\ref{main result}. For completeness, in Appendix~\ref{A}, we prove the equivalence between analyzing the distribution of the absorption time of an absorbing finite Markov chain with fixed initial state and analyzing the total variation distance between its law and its target stationary distribution. In Appendix~\ref{B}, we use Yau's relative entropy method~\cite{yau} to show that, for the poissonization of the Markov chain previously constructed, the number of copies of each of the $n$ types of coupons are independent and identically distributed with explicit marginals, which may not be immediate for some readers. We remark that, although these appendices are not necessarily essential to solve the main problem, the main ideas to analyze it came from them, being this the reason to present them separately.\\

\noindent \textbf{AI-use statement.} Chat GPT was used as an auxiliary editorial tool for proof reading, computation checking, figure generator, and literature research assistance. The ideas used in this paper to study the problem were not generated by AI.

\section{Definitions and main result}\label{def and res}

\subsection{A discrete-time Markov model}

We consider the coupon collector problem described in the introduction: there are $n\geq 2$ distinct types of coupons, and at each step a coupon is drawn independently and uniformly at random from these $n$ types. The collector keeps all the coupons obtained, including repetitions, and the focus is on the number $T_{n,m}$ of needed draws so that the collector has at least $m$ coupons of each type. To investigate the distribution of $T_{n,m}$ we model this problem as the discrete time absorbing Markov chain $\{X_\ell^{n,m}; \ell \in \mathbb N_0 \}$\footnote{$\mathbb N_0:=\mathbb N \cup \{0\}$.} with state space $\Omega_{n,m}=\{0,1,\ldots,m\}^n$ and transition matrix $P$ given by
\begin{flalign*}
	P (x, y) = \begin{cases}
		\frac{\mathbbm 1_{\{ x_i < m \}}}{n} ,& \text{ if } y = x^{i,+} \text{ for some } i \in \{1,\ldots,n\},\\
	 \frac{1}{n} \sum_{j =1}^{n} \mathbbm 1_{\{ x_j = m \}}   , &\text{ if } y = x,\\
	 0 & \text{ otherwise},
	\end{cases}
\end{flalign*}
where for every $i \in \{1,2,\ldots,n\}$, $x_i$ denotes the $i$-th coordinate of a vector $x \in \Omega_{n,m}$ and $x^{i,+}\in \mathbb R^n$ is the vector obtained from $x$ after adding $1$ at the $i$-th coordinate:
\begin{flalign*}
	x^{i,+}_k=\begin{cases}
		x_k + 1 &, \text{ if } k=i,\\
		x_k &, \text{ if } k \in \{1,2,\ldots,n\}\setminus\{i\}.
	\end{cases}
\end{flalign*}
Therefore, the $i$-th coordinate of the vector $X_\ell^{n,m}$ stands for the minimum between $m$ and the number of copies of type $i$ at time $\ell \in \mathbb N_0$.

We start the chain from $(0,\ldots,0)\in \Omega_{n,m}$, since it represents the state for when the collector has no coupons. Clearly, the stationary measure $\pi^{n,m}$ of this absorbing Markov chain is a Dirac measure concentrated in the unique absorbing state $(m,\ldots,m) \in \Omega_{n,m}$ and 
\begin{flalign}\label{T}
	T_{n,m}:=\min \left\{ \ell \in \mathbb N_0;\, X_\ell^{n,m} = (m,\ldots,m) \right\}.
\end{flalign}

\subsection{Cutoff phenomenon}

Below we define the cutoff phenomenon in the context of absorbing Markov chains.

\begin{definition}
Let $n$ be a scaling parameter. Let $\{ X_\ell^n ; \ell \in \mathbb N_0  \}$ be a discrete-time absorbing Markov chain on a finite state space $\Omega_n$ with fixed initial state $x_0^n \in \Omega_n$. Let $\mathcal A_n$ be the set of all absorbing states that can be reached from $x_0$ in a finite number of steps. Let $T_n=\min\{ \ell \in \mathbb N_0;  X_\ell^n \in \mathcal A_n  \}$ be the absorption time of the process. We say that this Markov chain presents cutoff at times $t_n>0$ with a window of size $w_n>0$ if $w_n=o(t_n)$ and if for any $\varepsilon >0$ there exists $\theta>0$ such that $\limsup_{n\to \infty} \mathbb P( T_n > t_n + \theta w_n) \leq \varepsilon$ and $\liminf_{n\to \infty} \mathbb P(T_n > t_n - \theta w_n) \geq 1-\varepsilon$. Moreover, if for any $\theta \in \mathbb R$ the probability $\mathbb P(T_n \leq t_n + \theta w_n)$ converges to $ G (\theta)$ for some probability distribution function $ G: \mathbb R \to [0,1]$, then we say that $G$ is the cutoff profile of this Markov chain. In particular,
\begin{flalign*}
\frac{T_n - t_n}{w_n} \stackrel{n \to \infty}{\longrightarrow} X,
\end{flalign*}
in distribution, where $X$ is a random variable with probability distribution function $G$.
\end{definition}

\begin{remark}
In order to justify the definition of cutoff for absorbing Markov chains given above, denote by $\mu_\ell^n$ the law of $X_\ell^n$ and by $\pi^n$ the target stationary measure of $\{ X_\ell^n ; \ell \in \mathbb N_0  \}$ for which $\mu_\ell^n \to \pi^n$ as $\ell \to \infty$. Proposition~\ref{dtv} asserts that
\begin{flalign*}
\| \mu_\ell^n - \pi^n \|_{\text{TV}}  &=\mathbb P(T_n > \ell).
\end{flalign*}
Therefore, the definition of cutoff given above is equivalent to total variation cutoff for absorbing Markov chains. 
\end{remark}

\begin{remark}
The definitions of cutoff, cutoff window and cutoff profile can be given for continuous-time Markov chains replacing $\mathbb N_0$ by $[0,\infty)$ above.
\end{remark}

\subsection{Main result}

Our main result asserts that the coupon collector Markov chain constructed above presents cutoff in every regime at which $m=m(n)$ increases with $n$. Furthermore, it identifies the cutoff time, window and profile, as well as the expected absorption time. In particular, the result solves the problem posed by Erd\H{o}s and Rényi in~\cite{erdos}.

\begin{theorem}\label{main result}
Let $F_m$ be the probability distribution function of a Gamma$(m,1)$, that is, 
	\begin{flalign*}
		F_m(t)= \begin{cases}
		1 - e^{-t} \sum_{k=0}^{m-1} \frac{t^k}{k!} &,\text{ if } t > 0,\\
		0 &, \text{ if } t\leq 0.
		\end{cases}
	\end{flalign*}
Define
\begin{flalign}\label{tn}
t_{n,m}:=n \big(F_{m}^{-1}(1-1/n) \big).
\end{flalign}
Let $W_{-1}$ denote the real branch of the Lambert $W$ function defined on
$[-e^{-1},0)$ by
\begin{flalign*}
W_{-1}(z)e^{W_{-1}(z)}=z,
\qquad W_{-1}(z)\leq -1.
\end{flalign*}
Define
\begin{flalign}\label{wn}
w_{n,m}:=\begin{cases}
n &, \text{ if } \lim_{n \to \infty}\frac{m}{\log n} = 0,\\
\frac{nW_{-1}(-e^{-1-1/\alpha})}{W_{-1}(-e^{-1-1/\alpha}) + 1} &, \text{ if } \lim_{n \to \infty}\frac{m}{\log n} = \alpha \in (0,+\infty),\\
\frac{nW_{-1}(-e^{-1-\log n/m})}{W_{-1}(-e^{-1-\log n/m})+ 1} &, \text{ if } \lim_{n \to \infty}\frac{m}{\log n} =+\infty.
\end{cases}
\end{flalign}
Then, for each $\theta \in \mathbb R$, we have
	\begin{flalign}\label{erdos}
		\lim_{n \to \infty} \mathbb P \left( T_{n,m} \leq \left\lfloor  t_{n,m} + \theta w_{n,m} \right\rfloor \right)=\mathcal G(\theta),
	\end{flalign}
where $\mathcal G$ is the standard Gumbel probability distribution function given in (\ref{gumbel}). Furthermore,
	\begin{flalign}\label{expect}
		\mathbb E [T_{n,m}]= t_{n,m} +  \gamma w_{n,m} + o\left(w_{n,m}\right) ,
	\end{flalign}
	where $\gamma$ is the Euler–Mascheroni constant given in (\ref{euler}).
\end{theorem}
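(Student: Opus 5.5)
Poissonization is the natural route. Let $\{N(t)\}$ be an independent rate-one Poisson process and $Y_t=X^{n,m}_{N(t)}$. Define $\tau_{n,m}$ as the hitting time of $(m,\ldots,m)$ by $Y$. Then the coordinates of $Y_t$ are i.i.d.: coordinate $i$ is $\min\{m,\mathrm{Poisson}(t/n)\}$, since each type receives coupons as an independent Poisson process of rate $1/n$ (this is the content of Appendix~\ref{B}). Hence
\begin{flalign*}
\mathbb P(\tau_{n,m}\le t)=F_m(t/n)^n .
\end{flalign*}
The first step is to prove the Gumbel limit for $\tau_{n,m}$:
\begin{flalign*}
n\bigl(1-F_m((t_{n,m}+\theta w_{n,m})/n)\bigr)\to e^{-\theta}.
\end{flalign*}
Writing $s=t_{n,m}/n$ and $h=\theta w_{n,m}/n$, we have $1-F_m(s)=1/n$ by definition, so we need
\begin{flalign*}
\frac{1-F_m(s+h)}{1-F_m(s)}\to e^{-\theta}.
\end{flalign*}
For this I will use the standard tail estimate $1-F_m(s)\sim e^{-s}s^{m-1}/(m-1)!\cdot \frac{s}{s-m+1}$, valid when $s/m$ stays away from $1$. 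The ratio then behaves like $\exp\bigl(-h+(m-1)\log(1+h/s)\bigr)\approx \exp\bigl(-h(1-(m-1)/s)\bigr)$, so the correct window is $n/(1-m/s)$ up to lower-order terms.

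Next I identify $s$ in each regime. Set $s=m\,x$. The defining equation $e^{-s}s^m/m!\approx 1/n$ gives $m(x-1-\log x)\approx \log n$. This yields $-x e^{-x}=-e^{-1-\log n/m}$ to leading order, so $x=-W_{-1}(-e^{-1-\log n/m})$ with the branch $W_{-1}\le -1$, since $x>1$. Then
\begin{flalign*}
\frac{1}{1-1/x}=\frac{x}{x-1}=\frac{W_{-1}}{W_{-1}+1},
\end{flalign*}
which is exactly \eqref{wn}. When $m/\log n\to\alpha$ the argument of $W_{-1}$ converges and can be replaced by its limit. When $m=o(\log n)$ we have $x\to\infty$ and the window is $\sim n$. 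I then check that in each regime the corrections (Stirling, the polynomial prefactor, replacing $\log n/m$ by $1/\alpha$) change $w_{n,m}$ only by a factor $1+o(1)$, which does not affect the Gumbel limit. I also check that $h\ll s$ and that $h^2 m/s^2\to 0$. The regime $\log n=o(m)$ is the main obstacle. There $x\to1$ and $s-m\asymp\sqrt{m\log n}$, so the crude tail expansion must be replaced by a normal-type approximation (Temme's uniform expansion, or a direct Laplace/saddle-point estimate of $\sum_{k\ge m} e^{-s}s^k/k!$). One must verify that the window $\approx ns/(s-m)\approx n\sqrt{m/(2\log n)}$ still gives ratio $e^{-\theta}$, including the quadratic term $h^2 m/(2s^2)$, which is negligible because $h\sim\sqrt{m/\log n}$ forces $h^2/m\to0$.

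Second, depoissonization. Here $\tau_{n,m}=\sum_{k\le T_{n,m}}E_k$ with i.i.d.\ Exp$(1)$ variables $E_k$. I will invoke the Depoissonization Lemma of Section~\ref{poissonization}. Its essential input is $|\tau_{n,m}-T_{n,m}|=O_P(\sqrt{t_{n,m}})$, which follows from the CLT or Chebyshev applied to the random sum, and this must be $o(w_{n,m})$. Since $t_{n,m}\asymp n\max(\log n,m)$ and $w_{n,m}\gtrsim n$, this reduces to $\sqrt{n\max(\log n, m)}=o(w_{n,m})$. This holds when $m\ll n$, and otherwise is handled by the fact that $w_{n,m}\ge n\sqrt{m/(2\log n)}$ grows like $\sqrt{nm}$ times $\sqrt{n/\log n}$. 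The floor in \eqref{erdos} is irrelevant since $w_{n,m}\to\infty$.

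Finally, for \eqref{expect} I will use that poissonization preserves expectations, so $\mathbb E[T_{n,m}]=\mathbb E[\tau_{n,m}]$. It then suffices to show uniform integrability of $(\tau_{n,m}-t_{n,m})/w_{n,m}$, because the Gumbel mean is $\gamma$. For the upper tail I use $\mathbb P(\tau>t)\le n(1-F_m(t/n))$ together with the tail-ratio estimate, which gives exponential decay in $\theta$. For the lower tail I use $F_m^n\le\exp(-n(1-F_m))$, which yields double-exponential decay. Both bounds must be established uniformly in $n$ for $\theta$ large, using monotonicity of the ratio in $h$.
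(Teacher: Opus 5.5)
Your overall route is the paper's route: poissonize and use the product structure to get $\mathbb P(\tau_{n,m}\le t)=F_m(t/n)^n$. Then show $n\big(1-F_m(u_{n,m}+\theta w_{n,m}/n)\big)\to e^{-\theta}$ in each regime, after locating $u_{n,m}=t_{n,m}/n$ through $m(x-1-\log x)\approx\log n$. Transfer to $T_{n,m}$ with Lemma~\ref{dePoissonization}, whose hypotheses are exactly your $w_{n,m}=o(t_{n,m})$ and $t_{n,m}=o(w_{n,m}^2)$. Finally obtain \eqref{expect} from $\mathbb E[T_{n,m}]=\mathbb E[\tau_{n,m}]$ plus dominated convergence. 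In regime III you do not need Temme's expansion or a saddle point. The paper shows that $1-F_m(s)=\frac{e^{-s}s^{m-1}}{(m-1)!}\cdot\frac{s}{s-m}\,(1+o(1))$ whenever $(s-m)^2/s\to\infty$, using only $0\le 1-\prod_{j\le k}(1-j/m)\le k(k+1)/(2m)$. Since $s-m\asymp\sqrt{m\log n}$, this covers regime III once a cruder two-sided bound has established $(u_{n,m}-m)/(m\delta_{n,m})\to1$.

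The step that is not yet a proof is the uniform integrability behind \eqref{expect}. Pointwise convergence for each fixed $\theta$, plus monotonicity in $\theta$, only bounds $\mathbb P\big(|\tau_{n,m}-t_{n,m}|>\theta w_{n,m}\big)$ by a constant for $\theta\ge\theta_0$. But $\theta$ ranges over an interval of length of order $t_{n,m}/w_{n,m}\to\infty$, so you need a decay rate in $\theta$ that is uniform in $n$, and ``monotonicity of the ratio in $h$'' does not supply it. The paper gets it from log-concavity of both $1-F_m$ and $F_m$. Chord inequalities through the points $\theta=0$ and $\theta=\pm1$ give, for $\theta\ge1$ and $n$ large, $\mathbb P\big(\tau_{n,m}>t_{n,m}+\theta w_{n,m}\big)\le e^{-\theta/2}$ and $\mathbb P\big(\tau_{n,m}<t_{n,m}-\theta w_{n,m}\big)\le e^{-(e-1)(\theta-1)/2}$.

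For the right tail, your tail-ratio idea can instead be made exact. Since $(s+h)^k\le s^k(1+h/s)^{m-1}$ for $k\le m-1$, you get $1-F_m(s+h)\le\big(1-F_m(s)\big)e^{-h(1-(m-1)/s)}$, and $(w_{n,m}/n)\big(1-(m-1)/u_{n,m}\big)\to1$ in all three regimes. For the left tail, your double-exponential route via $F_m^n\le e^{-n(1-F_m)}$ needs an $n$-uniform lower bound on $n\big(1-F_m(u_{n,m}-\theta w_{n,m}/n)\big)$ that grows in $\theta$, and you have not provided one. Note that log-concavity of $1-F_m$, anchored at $u_{n,m}$ and $u_{n,m}-w_{n,m}/n$, only bounds $1-F_m$ from above to the left of $u_{n,m}$, which is the wrong direction. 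Concavity of $\log F_m$, as in the paper, closes this directly: $F_m(u_{n,m}-\theta h_1)^n\le F_m(u_{n,m}-h_1)^n e^{-(\theta-1)c_{n,m}}$ with $h_1=w_{n,m}/n$ and $c_{n,m}\to e-1$.
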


\begin{remark}
The reader can easily check that when $m$ is fixed, we have $t_{n,m}= n \big(\log n +(m-1)  \log \log n - \log (m-1)! +o(1) \big)$. Moreover, along the proofs, it will be possible to verify that, in this case, $t_{n,m}$ can be replaced by $ n \big(\log n +(m-1)  \log \log n - \log (m-1)! \big)$. Since our focus is on the regime in which $m$ grows with $n$, we prefer to enunciate Theorem~\ref{main result} in the presented form.
\end{remark}

\section{Poissonization}\label{poissonization}

In this section we present a methodology, inspired in a technique called \textit{Poissonization}, to analyze tail probabilities of hitting times of discrete-time stochastic processes. This methodology will be used, in the next section, to prove Theorem~\ref{main result}. As mentioned in the introduction, poissonization is the general procedure of replacing a deterministic discrete parameter by an independent Poisson random variable. In what follows we define this notion for discrete-time stochastic processes:
\begin{definition}
Let $\{X_\ell ; \ell \in \mathbb N_0\}$ be a discrete-time stochastic process. Let $\{ N(t) ; t \geq 0 \}$ be a Poisson process with intensity $1$, independent of $\{X_\ell ; \ell \in \mathbb N_0\}$. For each $t \geq 0$ define
	\begin{flalign*}
		Y_t := X_{N(t)}.
	\end{flalign*} 
	The process $\{ Y_t; t \geq 0\}$ is called the \textit{poissonization} of $\{X_\ell ; \ell \in \mathbb N_0\}$.
\end{definition}

\begin{remark}
The intensity of the Poisson process used in the previous definition is properly chosen so that if $\{X_\ell ; \ell \in \mathbb N_0\}$ is a discrete-time homogeneous Markov chain with transition matrix $P$, then its poissonization is a continuous-time Markov chain with generator $P -I$.
\end{remark}

An important property that we gain from a poissonization of a discrete-time stochastic process is that it does not change the expectation of a hitting time. Although it may be clear to many readers, we prove it below:

\begin{proposition}\label{expectations}
Let $\{ X_\ell ; \ell \in \mathbb N_0  \}$ be a discrete-time stochastic process with state space $\Omega$. Let $\mathcal A$ be a subset of $\Omega$. Let $T=\min\{ \ell \in \mathbb N_0;  X_\ell \in \mathcal A  \}$.
Let $\{ Y_t ; t \geq 0  \}$ be the Poissonization of $\{ X_\ell ; \ell \in \mathbb N_0  \}$. Let $\tau:=\inf \{ t \geq 0 ; Y_t \ \in \mathcal A  \}$. We have
	\begin{flalign*}
		\mathbb E[T]=\mathbb E[\tau].
	\end{flalign*}
\end{proposition}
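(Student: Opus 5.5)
The idea is to express $\tau$ directly in terms of $T$ and the jump times of the Poisson process, and then use independence. Let $S_0:=0$ and, for $k\geq 1$, let $S_k$ be the $k$-th jump time of $\{N(t); t\geq 0\}$. Then $S_k=E_1+\cdots+E_k$, where $E_1,E_2,\ldots$ are i.i.d. exponential random variables with mean $1$, independent of $\{X_\ell ; \ell \in \mathbb N_0\}$. Since $N$ is right-continuous, nondecreasing, integer-valued, and takes every value $k\in\mathbb N_0$ exactly on the interval $[S_k,S_{k+1})$, the path $t\mapsto Y_t=X_{N(t)}$ visits exactly the states $X_0,X_1,X_2,\ldots$, in order. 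The state $X_k$ is occupied during $[S_k,S_{k+1})$. Consequently
\begin{flalign*}
\tau=\inf\{t\geq 0;\, X_{N(t)}\in\mathcal A\}=S_{T},
\end{flalign*}
with the convention $S_T=\infty$ on $\{T=\infty\}$. The infimum is attained at $S_T$: by definition of $T$, no index $k<T$ has $X_k\in\mathcal A$, while $Y_{S_T}=X_T\in\mathcal A$. This pathwise identity is the only step needing care, namely the right-continuity of $N$ and the fact that it increases by exactly one at each jump almost surely. Hitting times of consecutive states are then not skipped.

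Next I condition on $T$. On $\{T=k\}$ with $k\in\mathbb N_0$, we have $\tau=S_k$. The event $\{T=k\}$ is measurable with respect to $\{X_\ell\}$, which is independent of $(E_i)_{i\geq1}$. Hence
\begin{flalign*}
\mathbb E[\tau\,\mathbbm 1_{\{T=k\}}]=\mathbb E[S_k]\,\mathbb P(T=k)=k\,\mathbb P(T=k).
\end{flalign*}
On $\{T=\infty\}$ both $T$ and $\tau$ are infinite. Summing over $k$ by monotone convergence (all terms are nonnegative) gives
\begin{flalign*}
\mathbb E[\tau]=\sum_{k\geq0}k\,\mathbb P(T=k)+\infty\cdot\mathbb P(T=\infty)=\mathbb E[T].
\end{flalign*}
This holds in $[0,\infty]$, so in particular one side is finite if and only if the other is.

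An equivalent route is Wald's identity. One writes $\tau=\sum_{i=1}^{T}E_i$ and notes that $T$ is independent of the $E_i$. The conditioning argument above avoids integrability hypotheses, so I would present that one.
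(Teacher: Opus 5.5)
Your proof is correct and follows essentially the same route as the paper: identify $\tau$ with the $T$-th arrival time of the Poisson process, condition on $\{T=k\}$ using independence of $X$ and $N$, and use $\mathbb E[S_k]=k$. Your explicit justification of the pathwise identity $\tau=S_T$ and your handling of the event $\{T=\infty\}$ are slightly more careful than the paper's argument, which sums only over $k\in\mathbb N_0$.
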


\begin{proof}
	For each $k \in \mathbb N_0$ define $\mathcal E_k:=\inf\{  t \geq 0 ; N(t) \geq k \}$. Observe that $\mathcal E_k$ has distribution Gamma$(k,1)$ because it is the sum of $k$ independent exponential random variables with rate $1$. Moreover, $\mathcal E_{T}= \tau$. Therefore,
	\begin{flalign*}
		\mathbb E[\tau] &= \mathbb E [\mathcal E_{T}]= \sum_{k=0}^{+\infty} \mathbb E [\mathcal E_{T} | T = k] \, \mathbb P (T=k)= \sum_{k=0}^{+\infty} \mathbb E [\mathcal E_{k} ] \, \mathbb P (T=k)= \sum_{k=0}^{+\infty} k \, \mathbb P (T=k)=\mathbb E[T].
	\end{flalign*}
\end{proof}

Observe that with the above result one can use tools and techniques made for continuous-time stochastic processes to compute the expected value of a hitting time $T$. In this paper, for instance, we use Yau's relative entropy method~\cite{yau}. A natural question is whether one can also analyze $\mathbb P (T>t)$ using poissonization, at least for large times. Our next result establishes a sufficient condition for that:

\begin{lemma}[Depoissonization Lemma]\label{dePoissonization}
	Let $n$ be a scaling parameter. Let $\{ X_\ell^n ; \ell \in \mathbb N_0  \}$ be a discrete-time stochastic process with state space $\Omega_n$. Let $\mathcal A_n$ be a subset of $\Omega_n$. Let $T_n=\min\{ \ell \in \mathbb N_0;  X_\ell^n \in \mathcal A_n  \}$.
	Let $\{ Y_t^n ; t \geq 0  \}$ be the Poissonization of $\{ X_\ell^n ; \ell \in \mathbb N_0  \}$. Let $\tau_n:=\inf \{ t \geq 0 ; Y_t^n \ \in \mathcal A_n  \}$.
	Assume that there exist times $t_n,w_n \geq 0$ and a continuous probability distribution function $ G: \mathbb R \to [0,1]$ such that $\lim_{n \to \infty} \mathbb P \Big(\tau_n \leq t_n + \theta w_n\Big)= G(\theta)$ for every $\theta \in \mathbb R$. If $t_n=o\big(|w_n|^2\big)$ and $w_n=o(t_n)$ then
	\begin{flalign*}
		\lim_{n \to \infty}	\mathbb P \Big( T_{n} \leq \left\lfloor t_{n} + \theta w_n\right\rfloor  \Big) =  G (\theta ).
	\end{flalign*}
\end{lemma}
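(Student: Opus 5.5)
The plan is to use the coupling $\tau_n=\mathcal E_{T_n}$, where $\mathcal E_k$ is the $k$-th arrival time of the independent Poisson process, exactly as in the proof of Proposition~\ref{expectations}. Since $\mathcal E_k$ is a sum of $k$ i.i.d.\ rate-one exponentials, $\mathcal E_k$ concentrates around $k$ with fluctuations of order $\sqrt{k}$. On the relevant scale, where $T_n$ is of order $t_n$, these fluctuations are $O(\sqrt{t_n})=o(w_n)$ by the hypothesis $t_n=o(w_n^2)$. So $\tau_n$ and $T_n$ should differ by $o(w_n)$ with high probability, and continuity of $G$ then transfers the limit from $\tau_n$ to $T_n$. (We may assume $w_n>0$ for large $n$, since $t_n=o(w_n^2)$ forces $w_n\neq 0$.)

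Fix $\theta$ and $\delta>0$, and set $k_n=\lfloor t_n+\theta w_n\rfloor$, so $k_n=t_n(1+o(1))$ because $w_n=o(t_n)$.

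\textbf{Upper bound.} On the event $\{T_n\le k_n\}$, monotonicity of $k\mapsto\mathcal E_k$ gives $\tau_n=\mathcal E_{T_n}\le\mathcal E_{k_n}$. Hence
\[
\mathbb P(T_n\le k_n)\le \mathbb P(\tau_n\le t_n+(\theta+\delta)w_n)+\mathbb P\big(\mathcal E_{k_n}>t_n+(\theta+\delta)w_n\big).
\]
Since $\mathcal E_{k_n}$ is independent of $T_n$ and $t_n+(\theta+\delta)w_n-k_n\ge \delta w_n$, Chebyshev bounds the last term by $k_n/(\delta w_n)^2\to0$. Thus $\limsup_n \mathbb P(T_n\le k_n)\le G(\theta+\delta)$.

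\textbf{Lower bound.} Here a little more care is needed, because $T_n$ is random and $\mathcal E_{T_n}$ is not $\mathcal E$ evaluated at a fixed index. Use the inclusion
\[
\{\tau_n\le t_n+(\theta-\delta)w_n\}\subset\{T_n\le k_n\}\cup\{\mathcal E_{k_n+1}\le t_n+(\theta-\delta)w_n\}.
\]
This holds because if $T_n\ge k_n+1$ then $\tau_n=\mathcal E_{T_n}\ge\mathcal E_{k_n+1}$. The index $k_n+1$ is deterministic, and $\mathcal E_{k_n+1}$ has mean $k_n+1\ge t_n+\theta w_n$ and variance $k_n+1$. Chebyshev then gives
\[
\mathbb P\big(\mathcal E_{k_n+1}\le t_n+(\theta-\delta)w_n\big)\le \frac{k_n+1}{(\delta w_n)^2}\to0.
\]
Hence $\liminf_n \mathbb P(T_n\le k_n)\ge G(\theta-\delta)$.

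Letting $\delta\downarrow0$ and using continuity of $G$ at $\theta$ yields the claim. The only real point to check is that both error events involve $\mathcal E$ at a \emph{deterministic} index. This is what avoids any need for independence between $T_n$ and the Poisson clock beyond monotonicity, though independence holds anyway. The remaining verifications are routine: $k_n/w_n^2\to0$ from $t_n=o(w_n^2)$ and $w_n=o(t_n)$, and the floor shifts $k_n$ by at most $1$, which is negligible against $\delta w_n$.
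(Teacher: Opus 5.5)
Your proof is correct and is essentially the paper's argument: the paper likewise bounds $\mathbb P(T_n\le\lfloor t_n+\theta w_n\rfloor)$ above and below by $\mathbb P(\tau_n\le t_n+(\theta\pm\varepsilon)w_n)$ up to Chebyshev-controlled error terms, then uses continuity of $G$. The only difference is cosmetic: the paper writes the error events in terms of Poisson counts, $\{N(t_n+(\theta+\varepsilon)w_n)<\lfloor t_n+\theta w_n\rfloor\}$ and $\{N(t_n+(\theta-\varepsilon)w_n)>\lfloor t_n+\theta w_n\rfloor\}$, and by the duality $\{N(t)\ge k\}=\{\mathcal E_k\le t\}$ these are exactly your events $\{\mathcal E_{k_n}>t_n+(\theta+\delta)w_n\}$ and $\{\mathcal E_{k_n+1}\le t_n+(\theta-\delta)w_n\}$, so the paper applies Chebyshev to Poisson variables where you apply it to Gamma variables.
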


\begin{proof}
Observe that $\{\tau_n \leq t\}=\{T_n\leq N(t)\}$, for every $t\geq0$. Fix $\theta\in\mathbb R$ and $\varepsilon>0$. Since
	$w_n=o(t_n)$, both $t_n+(\theta+\varepsilon)w_n$ and
	$t_n+(\theta-\varepsilon)w_n$ are positive for sufficiently
	large $n$. Moreover, since $t_n=o\big(|w_n|^2\big)$ and $w_n=o(t_n)$, we have
\begin{flalign}\label{w}
	\frac{1}{w_n} =\frac{w_n}{t_n}\frac{t_n}{|w_n|^2}=o(1).
\end{flalign}

First, we obtain an upper bound for $\mathbb P\big(T_n\leq\lfloor t_n+\theta w_n\rfloor\big)$:
\begin{flalign*}
\mathbb P\big(T_n\leq\lfloor t_n+\theta w_n\rfloor\big)&=\mathbb P\Big(
		T_n\leq\lfloor t_n+\theta w_n\rfloor,\,
		N\big(t_n+(\theta+\varepsilon)w_n\big)
		\geq\lfloor t_n+\theta w_n\rfloor
		\Big)\\
		&\quad+\mathbb P\Big(
		T_n\leq\lfloor t_n+\theta w_n\rfloor,\,
		N\big(t_n+(\theta+\varepsilon)w_n\big)
		<\lfloor t_n+\theta w_n\rfloor
		\Big)\\
		&\leq\mathbb P\Big(
		T_n\leq N\big(t_n+(\theta+\varepsilon)w_n\big)
		\Big)
		+\mathbb P\Big(
		N\big(t_n+(\theta+\varepsilon)w_n\big)
		<\lfloor t_n+\theta w_n\rfloor
		\Big)\\
		&=\mathbb P\big(\tau_n\leq t_n+(\theta+\varepsilon)w_n\big)
		+\mathbb P\Big(
		N\big(t_n+(\theta+\varepsilon)w_n\big)
		<\lfloor t_n+\theta w_n\rfloor
		\Big).
	\end{flalign*}
	Since $\lfloor t_n+\theta w_n\rfloor\leq t_n+\theta w_n$ and $t_n=o\big(|w_n|^2\big)$, by
	Tchebyshev's inequality and (\ref{w}), we have
	\begin{flalign*}
		&\mathbb P\Big(
		N\big(t_n+(\theta+\varepsilon)w_n\big)
		<\lfloor t_n+\theta w_n\rfloor
		\Big)\\
		&\leq\mathbb P\Big(
		N\big(t_n+(\theta+\varepsilon)w_n\big)
		-\big(t_n+(\theta+\varepsilon)w_n\big)
		<-\varepsilon w_n
		\Big)\\
		&\leq\mathbb P\Big(
		\big|
		N\big(t_n+(\theta+\varepsilon)w_n\big)
		-\big(t_n+(\theta+\varepsilon)w_n\big)
		\big|>\varepsilon w_n
		\Big)\\
		&\leq\frac{t_n+(\theta+\varepsilon)w_n}
		{\varepsilon^2|w_n|^2}\\
		&=\frac{1}{\varepsilon^2} \frac{t_n}{|w_n|^2} + \frac{\theta+\varepsilon}{\varepsilon^2} \frac{1}{w_n}=o(1).
	\end{flalign*}
Consequently,
	\begin{flalign*}
		\limsup_{n\to\infty}
		\mathbb P\big(T_n\leq\lfloor t_n+\theta w_n\rfloor\big)
		&\leq
		\lim_{n\to\infty}
		\mathbb P\big(\tau_n\leq t_n+(\theta+\varepsilon)w_n\big)= G(\theta+\varepsilon).
	\end{flalign*}
	
	For the lower bound, observe that
	\begin{flalign*}
		&\mathbb P\big(T_n\leq\lfloor t_n+\theta w_n\rfloor\big)\\
		&\geq\mathbb P\Big(
		T_n\leq N\big(t_n+(\theta-\varepsilon)w_n\big),\,
		N\big(t_n+(\theta-\varepsilon)w_n\big)
		\leq\lfloor t_n+\theta w_n\rfloor
		\Big)\\
		&\geq\mathbb P\Big(
		T_n\leq N\big(t_n+(\theta-\varepsilon)w_n\big)
		\Big)
		-\mathbb P\Big(
		N\big(t_n+(\theta-\varepsilon)w_n\big)
		>\lfloor t_n+\theta w_n\rfloor
		\Big)\\
		&=\mathbb P\big(\tau_n\leq t_n+(\theta-\varepsilon)w_n\big)
		-\mathbb P\Big(
		N\big(t_n+(\theta-\varepsilon)w_n\big)
		>\lfloor t_n+\theta w_n\rfloor
		\Big).
	\end{flalign*}
	Since the Poisson process takes integer values, we have
	\begin{flalign*}
\left\{  	N\big(t_n+(\theta-\varepsilon)w_n\big)
>\lfloor t_n+\theta w_n\rfloor   \right\} \subset \left\{   	N\big(t_n+(\theta-\varepsilon)w_n\big)>t_n+\theta w_n   \right\}.
	\end{flalign*}
	Therefore, again, by Tchebyshev's inequality and (\ref{w}), and since $t_n=o\big(|w_n|^2\big)$, we obtain
	\begin{flalign*}
		&\mathbb P\Big(
		N\big(t_n+(\theta-\varepsilon)w_n\big)
		>\lfloor t_n+\theta w_n\rfloor
		\Big)\\
		&\leq\mathbb P\Big(
		N\big(t_n+(\theta-\varepsilon)w_n\big)
		-\big(t_n+(\theta-\varepsilon)w_n\big)
		>\varepsilon w_n
		\Big)\\
		&\leq\frac{t_n+(\theta-\varepsilon)w_n}
		{\varepsilon^2w_n^2}\\
	&=\frac{1}{\varepsilon^2} \frac{t_n}{|w_n|^2} + \frac{\theta-\varepsilon}{\varepsilon^2} \frac{1}{w_n}=o(1).
	\end{flalign*}
	Hence,
	\begin{flalign*}
		\liminf_{n\to\infty}
		\mathbb P\big(T_n\leq\lfloor t_n+\theta w_n\rfloor\big)
		&\geq
		\lim_{n\to\infty}
		\mathbb P\big(\tau_n\leq t_n+(\theta-\varepsilon)w_n\big)= G(\theta-\varepsilon).
	\end{flalign*}
	Combining the two bounds, we conclude that
	\begin{flalign*}
		 G(\theta-\varepsilon)
		&\leq\liminf_{n\to\infty}
		\mathbb P\big(T_n\leq\lfloor t_n+\theta w_n\rfloor\big)\leq\limsup_{n\to\infty}
		\mathbb P\big(T_n\leq\lfloor t_n+\theta w_n\rfloor\big)
		\leq G(\theta+\varepsilon).
	\end{flalign*}
	Letting $\varepsilon\downarrow0$ and using the continuity
	of $\mathcal G$ at $\theta$, we obtain
	\[
	\lim_{n\to\infty}
	\mathbb P\big(T_n\leq\lfloor t_n+\theta w_n\rfloor\big)
	= G(\theta).
	\]
\end{proof}

\section{Proof of the main result}\label{main arg}

In this section we prove Theorem~\ref{main result}. For that purpose, let $\{ Y_t^{n,m}; t \geq 0\}$ be the poissonization of the coupon collector Markov chain. By definition, it is a continuous-time absorbing Markov chain with state space $\Omega_{n,m}$ and infinitesimal generator given by
\begin{flalign*}
	\mathfrak L_{n,m} f({ x})=\frac{1}{n} \sum_{i=1}^n \mathbbm 1_{\{ x_i < m \}} \left( f({x}^{i,+})- f({ x}) \right),\,\,\forall x \in \Omega_{n,m}
\end{flalign*}
for every $f:\Omega_{n,m} \to \mathbb R$. Define 
\begin{flalign}\label{tau}
\tau_{n,m}:=\min\{ t \geq 0 ; Y_t^{n,m}=(m,\ldots,m) \}.
\end{flalign}
Recall the definitions of $\mathcal G$, $\gamma$, $t_{n,m}$ and $w_{n,m}$ given in (\ref{gumbel}), (\ref{euler}), (\ref{tn}) and (\ref{wn}), respectively. The idea is to show that $\mathbb P (\tau_{n,m} \leq t_{n,m} + \theta w_{n,m})$ converges to $\mathcal G (\theta)$ for every $\theta \in \mathbb R$, as $n \to \infty$ and then to combine it with Proposition~\ref{expectations} and the Depoissonization Lemma (Lemma~\ref{dePoissonization}) to conclude Theorem~\ref{main result}.

We first observe that, by the \textit{thinning property} of Poisson processes, if one looks at the $k$-th coordinate of the Markov chain $\{ Y_t^{n,m}; t \geq 0\}$, say $\{ Y_t^{n,m}(k); t \geq 0\}$, then they can observe that it follows the law of a Poisson process with intensity $1/n$ until it is absorbed at $m$ and it is independent of the other coordinates values. 
Indeed, in Proposition~\ref{thinning} we prove that for every $t\geq 0$ the random variables $Y_t^{n,m}(1),\ldots, Y_t^{n,m}(n)$ are independent and identically distributed with probability mass function $p_t:\{0,1,\ldots,m\}\to [0,1]$ given by
\begin{flalign}\label{pt}
p_t(k)=  \mathbbm 1_{\{ k <m   \}}\,e^{-t/n} \frac{(t/n)^{k}}{k!}   + \mathbbm 1_{\{ k =m   \}} \left( 1 - \,e^{-t/n} \sum_{j=0}^{m-1}\frac{(t/n)^{j}}{j!} \right),\,\,\forall k \in \{0,1,\ldots,m\}.
\end{flalign}
Recall the definition of $F_m$ given in the statement of Theorem~\ref{main result}. From the above observation we conclude that the distribution of $\tau_{n,m}/n$ is given by the maximum among $n$ independent Gamma$(m,1)$ random variables:
\begin{flalign}\label{tau dist}
\mathbb P (\tau_{n,m} \leq t) =\mathbb P \big(Y_t^{n,m}=(m,\ldots,m)\big)= \left( 1 - \,e^{-t/n} \sum_{j=0}^{m-1}\frac{(t/n)^{j}}{j!} \right)^n= \big(F_m(t/n)\big)^n,\, \forall t \geq 0.
\end{flalign}
The graph of the function $t \mapsto \mathbb{P}(\tau_{n,m}\leq t)$ illustrates the abrupt convergence from $0$ to $1$, see the example in which $n=980$ and $m=100$ depicted in Figure~\ref{plot}:
\begin{figure}[H]\label{plot}
	\centering
	\includegraphics[scale=0.4]{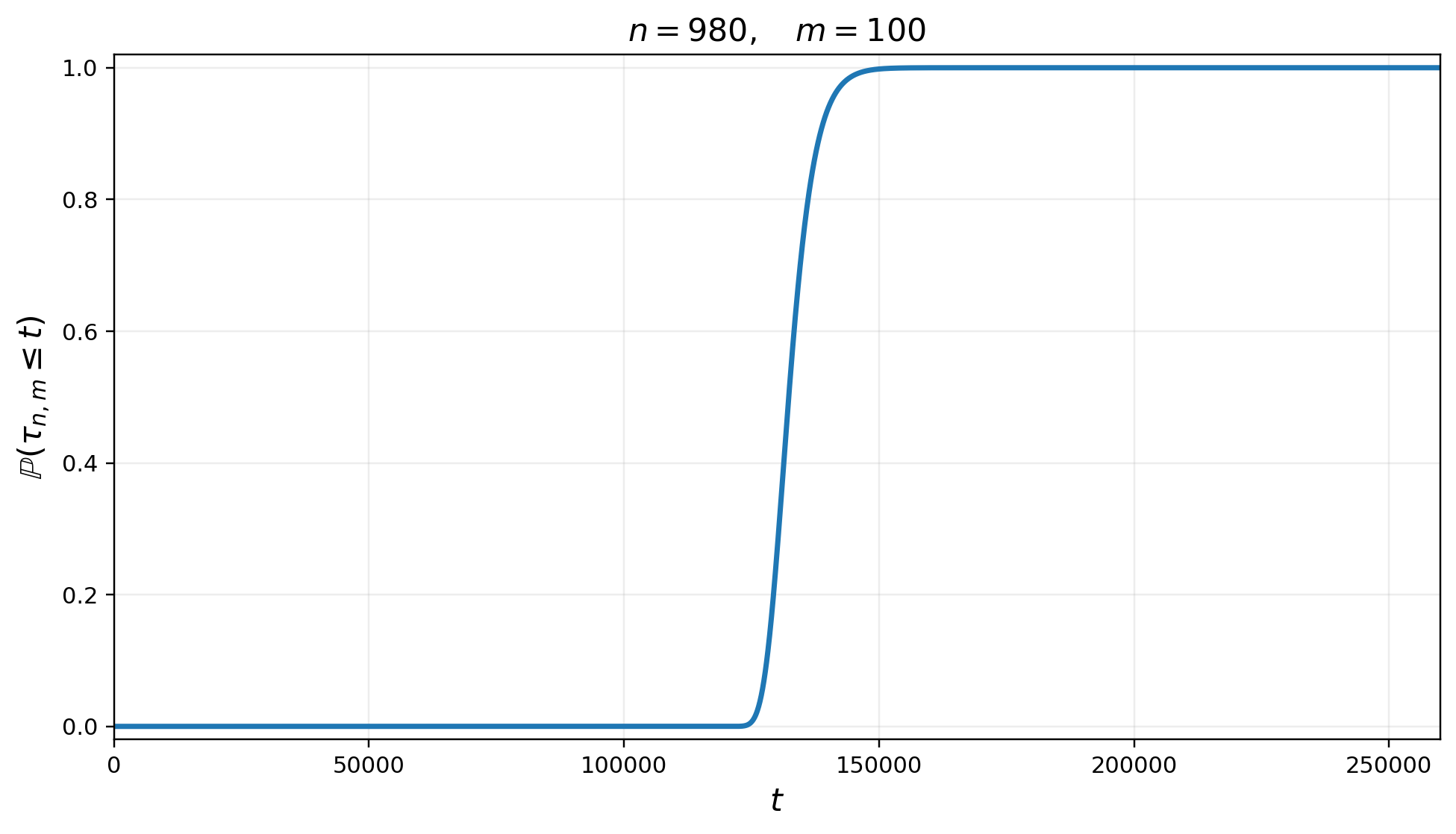}
	\caption{Graph of $t \mapsto \mathbb{P}(\tau_{n,m}\leq t)$, with $n=980$ and $m=100$.}
\end{figure}

To accomplish our goal, for each $\theta \in \mathbb R$ we must determine $\tilde t_{n,m}(\theta)$ such that $\Big(F_m\big(\tilde t_{n,m}(\theta)/n\big)\Big)^n$ converges to $\mathcal G(\theta)=e^{-e^{-\theta}}$. Clearly, this happens when
\begin{flalign*}
F_m\big(\tilde t_{n,m}(\theta)/n\big)=1 - \frac{e^{-\theta}+o(1)}{n},
\end{flalign*}
and we must have $\tilde t_{n,m}(\theta)=n F_m^{-1} \left(1 - \frac{e^{-\theta}+o(1)}{n} \right)$. Since we expect that this process presents cutoff, the choice on the parameter $\theta$ should not be important to determine when the phenomenon occurs. To simplify our computations, we define $t_{n,m}$ as in (\ref{tn}).

The following lemma shows that the sequence $w_{n,m}$, defined in (\ref{wn}), satisfies $w_{n,m}=o(t_{n,m})$ and $t_{n,m}=o(|w_{n,m}|^2)$, and that we can replace $\tilde t_{n,m}(\theta)$ by $t_{n,m}+\theta w_{n,m}$.

\begin{lemma}\label{profile}
	The following statements hold:
\begin{enumerate}
	\item[I.] For each $\theta \in \mathbb R$, we have
	\begin{flalign*}
		F_m \left(\frac{t_{n,m} + \theta w_{n,m}}{n} \right) =1- \frac{e^{-\theta} + o(1) }{n} ;
	\end{flalign*}
	\item[II.] $w_{n,m}=o(t_{n,m})$;
	\item[III.] $t_{n,m}=o(|w_{n,m}|^2)$.
\end{enumerate}
\end{lemma}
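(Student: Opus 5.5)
Set $x_n:=F_m^{-1}(1-1/n)=t_{n,m}/n$ and $h_n:=w_{n,m}/n$, and write the Gamma tail as $\bar F_m(x)=1-F_m(x)=e^{-x}\sum_{k=0}^{m-1}x^k/k!$. Statement I is then equivalent to
\[
\frac{\bar F_m(x_n+\theta h_n)}{\bar F_m(x_n)}\longrightarrow e^{-\theta},
\]
because $\bar F_m(x_n)=1/n$ by definition. Statements II and III become $h_n=o(x_n)$ and $x_n=o(n h_n^2)$.

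The main tool is the standard asymptotic comparison of the Gamma tail with its last Poisson term. For $x>m$ one has
\[
\bar F_m(x)=e^{-x}\frac{x^{m-1}}{(m-1)!}\cdot\frac{x}{x-m+1}\,(1+o(1)),
\]
uniformly as long as $(x-m)^2/x\to\infty$. The proof compares the ratios of consecutive terms $k/x$ with a geometric series, and shows the correction is negligible in that range. With this estimate,
\[
\log\frac{\bar F_m(x_n+\theta h_n)}{\bar F_m(x_n)}=-\theta h_n+(m-1)\log\Big(1+\frac{\theta h_n}{x_n}\Big)+o(1)=-\theta h_n\Big(1-\frac{m-1}{x_n}\Big)+O\Big(\frac{m h_n^2}{x_n^2}\Big)+o(1).
\]
The target for the proof of I is therefore twofold:
\begin{itemize}
\item[(a)] $h_n\,(1-m/x_n)\to 1$;
\item[(b)] $m h_n^2/x_n^2\to 0$, together with the requirement that the geometric factor $x/(x-m+1)$ barely changes between $x_n$ and $x_n+\theta h_n$.
\end{itemize}
Note that $h_n=1/(1-m/x_n)$ is exactly the inverse hazard rate, consistent with Long's window.

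Next I locate $x_n$ by solving $\bar F_m(x_n)=1/n$ asymptotically. Write $x_n=m\lambda_n$ with $\lambda_n>1$. Stirling's formula turns $\bar F_m(x_n)=1/n$ into
\[
m\big(\lambda_n-1-\log\lambda_n\big)=\log n-\tfrac12\log m+O\big(\log\tfrac{\lambda_n}{\lambda_n-1}\big)+O(1),
\]
so to leading order
\[
\lambda_n-1-\log\lambda_n=\frac{\log n}{m}(1+o(1)).
\]
This equation is solved by $-\lambda=W_{-1}\big(-e^{-1-\log n/m}\big)$. The reason is that $\lambda e^{-\lambda}=e^{-1-c}$ is equivalent to $\lambda-\log\lambda=1+c$, and we take the branch with $\lambda>1$. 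Hence
\[
1-\frac{m}{x_n}=1-\frac{1}{\lambda_n}=\frac{W+1}{W},\qquad W:=W_{-1}\big(-e^{-1-\log n/m}\big),
\]
which is precisely the reciprocal of the defined $h_n$. I then split into the three regimes.

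\begin{itemize}
\item \textbf{Regime $m/\log n\to0$.} Here $\lambda_n\to\infty$, and in fact $x_n=\log n\,(1+o(1))$ with $m/x_n\to 0$. So $h_n=1$ satisfies (a).
\item \textbf{Regime $m/\log n\to\alpha$.} Here $\lambda_n$ tends to the root $\lambda_\alpha>1$ of $\lambda-1-\log\lambda=1/\alpha$. Continuity of $W_{-1}$ gives (a).
\item \textbf{Regime $\log n=o(m)$.} Write $c=\log n/m\to0$. Then $\lambda_n-1\sim\sqrt{2c}$, so $x_n-m\sim\sqrt{2m\log n}$. The hypothesis of the tail estimate holds because $(x_n-m)^2/x_n\sim 2\log n\to\infty$. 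The delicate point is (a): the correction terms in the equation for $\lambda_n$ have size $O(\log m/m)$, and they must be shown to change $\lambda_n-1$ only by a factor $1+o(1)$. This should follow from $\frac{d}{d\lambda}(\lambda-1-\log\lambda)=1-1/\lambda$, giving a relative perturbation of order $\log m/\log n$. That ratio need not be small, however, if $m$ grows faster than any power of $\log n$.
\end{itemize}

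This last step is the main obstacle. I would handle it by using the precise correction: the $-\tfrac12\log m$ term is exactly compensated by the factor $x/(x-m)\sim\sqrt{m/(2\log n)}$. This leaves a residual of order $\log\log n$, and then $\lambda_n-1=\sqrt{2c}\,(1+O(\log\log n/\log n))$, so (a) holds. In the same regime, (b) reads $m h_n^2/x_n^2\sim 1/(2\log n)\to0$. The stability of the geometric factor follows from $h_n=o(x_n-m)$, since $h_n\sim\sqrt{m/(2\log n)}$ while $x_n-m\sim\sqrt{2m\log n}$.

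Finally, II and III are read off from these asymptotics.

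\begin{itemize}
\item \textbf{Statement II.} We have $h_n/x_n\to0$ in every regime, because $h_n=O(1+\sqrt{m/\log n})$ and $x_n\geq m\vee \log n$.
\item \textbf{Statement III.} We need $n h_n^2/x_n\to\infty$. Since $h_n\geq1$, it suffices that $x_n=o(n)$. This holds because $x_n\leq C(m+\log n)$, with the implicit requirement that $m=o(n)$.
\end{itemize}

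In the regime $m\gg n$ I would instead use $h_n^2/x_n\sim 1/(2\log n)$, so III needs $n/\log n\to\infty$, which holds trivially. Thus III holds for every growth rate of $m$.
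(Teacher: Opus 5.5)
Your outline is the paper's proof in different packaging. The geometric-series estimate $1+\sum_{k}\frac{(m-1)\cdots(m-k)}{x^k}=\frac{x}{x-m}\bigl(1+O(x/(x-m)^2)\bigr)$ is the paper's identity in the third case. Localizing $x_n=u_{n,m}$ through Stirling and $\lambda-1-\log\lambda$ is the paper's rate-function argument with $I(r)$. Your cancellation of $-\tfrac12\log m$ against $\log\frac{x}{x-m}\approx-\log\delta_{n,m}$ is exactly the paper's term $-\tfrac12\log(m\delta_{n,m}^2)$. The regimes $m=o(\log n)$ and $m\sim\alpha\log n$, and your derivations of II and III, are fine. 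In those two regimes the crude bounds $1\le S\le x/(x-m+1)$ on the sum factor $S$ already localize $\lambda_n$ without circularity.

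The gap is in the regime $\log n=o(m)$. There you apply the sharp estimate $S=\frac{x}{x-m}(1+o(1))$ at $x=x_n$, justified by $(x_n-m)^2/x_n\sim2\log n$. That is precisely the localization you are trying to prove, so the step is circular. Without it your only lower bound is $S\ge1$, which gives $m(\lambda_n-1-\log\lambda_n)\ge\log n-\tfrac12\log m-O(1)$. This loses a constant factor in $\lambda_n-1$ as soon as $\log m$ is not $o(\log n)$, and it is vacuous for $m\ge n^2$. That, rather than ``$m$ faster than any power of $\log n$'', is the threshold where your first-order argument breaks.

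What is missing is a lower bound on $S$, or equivalently on $x_n-m$, that does not presuppose where $x_n$ is. The paper works at the deterministic points $x^\pm=m(1+(1\pm\varepsilon)\delta_{n,m})$. There $(x^\pm-m)^2/x^\pm\to\infty$ holds by the definition of $\delta_{n,m}$, and only the crude bounds $C_0/\delta_{n,m}\le S\le C_1/\delta_{n,m}$ are needed. This gives $\log\bar F_m(x^\pm)=-(1\pm\varepsilon)^2\log n+o(\log n)$, and monotonicity of $F_m$ then sandwiches $x_n$.

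Alternatively, the central limit theorem gives a priori $(x_n-m)/\sqrt m\to\infty$. Otherwise, along a subsequence, $\bar F_m(x_n)\ge\mathbb P(\Gamma_m>m+K\sqrt m)\to1-\Phi(K)>0$, contradicting $\bar F_m(x_n)=1/n$. Here $\Gamma_m\sim\mathrm{Gamma}(m,1)$ and $\Phi$ is the standard normal distribution function. After this, your sharp estimate applies at $x_n$ and your compensation argument closes.

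Two minor points. First, $\lambda_n-1=\sqrt{2c}\,(1+O(\log\log n/\log n))$ is not right as stated, since already $\delta_{n,m}=\sqrt{2c}\,(1+O(\sqrt c))$; you only need $(\lambda_n-1)/\delta_{n,m}\to1$, which is what your argument actually gives. Second, your split of III into $m=o(n)$ and $m\gg n$ omits $m\asymp n$, but your formula $h_n^2/x_n\sim1/(2\log n)$ covers the whole third regime anyway.
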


\begin{proof}
Define $u_{n,m}:=t_{n,m}/n$. Let $\theta \in \mathbb R$. The idea is to observe the behavior of $u_{n,m}$. Thus, we split the proof in three cases, one for each regime of $m=m(n)$, which becomes more delicate as $m$ increases faster. \\ 

\noindent \textbf{Case I ($\lim_{n \to \infty} \frac{m}{\log n}=0$):} 
For this case, observe that
\begin{flalign}
\frac{1-F_m(u_{n,m}+\theta)}{1-F_m(u_{n,m})}&=\frac{e^{-(u_{n,m}+\theta)} \sum_{k=0}^{m-1} \frac{(u_{n,m}+\theta)^k}{k!}}{e^{-(u_{n,m})} \sum_{k=0}^{m-1} \frac{(u_{n,m})^k}{k!}} \label{1-F}\\&=e^{-\theta} \frac{\sum_{k=0}^{m-1} \frac{(u_{n,m})^k}{k!}\left( 1 +\frac{\theta}{u_{n,m}} \right)^k}{\sum_{k=0}^{m-1} \frac{(u_{n,m})^k}{k!}} \nonumber \\
&=e^{-\theta}  + 
 \frac{e^{-\theta} \sum_{k=0}^{m-1} \frac{(u_{n,m})^k}{k!}\left[\left( 1 +\frac{\theta}{u_{n,m}} \right)^k - 1\right]}{\sum_{k=0}^{m-1} \frac{(u_{n,m})^k}{k!}}. \nonumber 
\end{flalign}
Since $e^{-u_{n,m}} \leq 1-F_m(u_{n,m}) = \frac{1}{n}$, we have $u_{n,m} \geq \log n$. 
Therefore,
\begin{flalign*}
\lim_{n \to \infty} \max_{k \in \{0,1,\ldots,m-1\}} \left| \left( 1 +\frac{\theta}{u_{n,m}} \right)^k - 1 \right| =0.
\end{flalign*}
Hence,
\begin{flalign*}
1- F_m \left(\frac{t_{n,m} + \theta w_{n,m}}{n} \right)=1-F_m(u_{n,m}+\theta)&=(1-F_m(u_{n,m}))\,(e^{-\theta}  + o(1))=\frac{e^{-\theta}  + o(1)}{n}.
\end{flalign*}
Moreover,
\begin{flalign*}
	\frac{w_{n,m}}{t_{n,m}}=\frac{n}{n u_{n,m}} \leq \frac{1}{\log n}= o(1).
\end{flalign*}
To conclude that $t_{n,m}=o(|w_{n,m}|^2)$, let $Z_m$ be a random variable with distribution Gamma$(m,1)$. By Markov's inequality,
\begin{flalign*}
\frac{1}{n}=1- F_m(u_{n,m})= \mathbb P (Z_m > u_{n,m}) \leq \frac{\mathbb E [e^{Z_m / 2}]}{e^{u_{n,m}/2}}=2^m e^{- u_{n,m} / 2}.
\end{flalign*}
Therefore, $u_{n,m} \leq 2 \log n + ( \log 4) m $. Hence,
\begin{flalign*}
\frac{t_{n,m}}{|w_{n,m}|^2} = \frac{u_{n,m}}{n} \leq \frac{2 \log n + ( \log 4) m}{n} = o(1).
\end{flalign*}

\noindent \textbf{Case II ($\lim_{n \to \infty} \frac{m}{\log n}=\alpha\in (0,+\infty)$):} 

In this regime we first show that $u_{n,m}$ behaves as $-W_{-1}(-e^{-1-1/\alpha})m$. We start observing that, since $\lim_{n \to \infty} \frac{m}{\log n}=\alpha\in (0,+\infty)$, we have
\begin{flalign}\label{ldp u}
	\frac{1}{m}\log (1-F_m(u_{n,m})) &=\frac{1}{m} \log n^{-1}=- \frac{\log n}{m}=-\frac{1}{\alpha} +o(1).
\end{flalign}
Another observation is that if $r>1$ then
\begin{flalign*}
\frac{\frac{(rm)^{k-1}}{(k-1)!}}{\frac{(rm)^{k}}{k!}}= \frac{k}{rm} \leq \frac{m-1}{r m} < \frac{1}{r}
\end{flalign*}
for every $k \in \{1,\ldots,m-1\}$. Therefore,
\begin{flalign*}
\sum_{k=0}^{m-2}\frac{(rm)^{k}}{k!}=\sum_{k=1}^{m-1}\frac{(rm)^{k-1}}{(k-1)!} < \frac{1}{r} \sum_{k=1}^{m-1}\frac{(rm)^{k}}{k!} < \frac{1}{r} \sum_{k=0}^{m-1}\frac{(rm)^{k}}{k!}= \frac{1}{r} \left(\frac{(rm)^{m-1}}{(m-1)!} +  \sum_{k=0}^{m-2}\frac{(rm)^{k}}{k!} \right).
\end{flalign*}
Hence,
\begin{flalign*}
\left(1 - \frac{1}{r} \right) \sum_{k=0}^{m-2}\frac{(rm)^{k}}{k!} < \frac{1}{r} \frac{(rm)^{m-1}}{(m-1)!}.
\end{flalign*}
Dividing both sides above by $1-1/r$ we obtain
\begin{flalign*}
 \sum_{k=0}^{m-2}\frac{(rm)^{k}}{k!} < \frac{1}{r-1} \frac{(rm)^{m-1}}{(m-1)!}.
\end{flalign*}
Adding $\frac{(rm)^{m-1}}{(m-1)!}$ on both sides of the above inequality, we obtain
\begin{flalign*}
\frac{(rm)^{m-1}}{(m-1)!}<	\sum_{k=0}^{m-1}\frac{(rm)^{k}}{k!} < \frac{r}{r-1} \frac{(rm)^{m-1}}{(m-1)!}.
\end{flalign*}
We conclude that for any $r> 1$ we have
\begin{flalign*}
\log (1-F_m(rm)) &=- rm + (m-1) \log (rm) -\log (m-1)! + \mathcal O (1).
\end{flalign*}
Using Stirling's formula one can see that $\log (m-1)! = (m-1) \log (m-1) - (m-1) + O(\log (m-1))$.
Therefore,
\begin{flalign*}
	\log (1-F_m(rm)) &=- m I(r) +o(m),
\end{flalign*}
where $I(r):=r -1 - \log r$. Hence,
\begin{flalign}\label{epsilon}
 \lim_{n \to \infty}	\frac{1}{m}\log (1-F_m(r m)) &=-  I(r) .
\end{flalign}
Observe that 
$I'(r)=1-\frac{1}{r}>0$ for every $r>1$. Hence, $I$ is strictly increasing on $(1,+\infty)$. 
Let $w>1$ be the unique solution of $I(w) = 1/\alpha$. Then, 
\begin{flalign*}
-w+\log w = -1-1/\alpha.
\end{flalign*}
Applying the exponential to both sides of the above identity we obtain
\begin{flalign*}
-w e^{-w} = -e^{-1-1/\alpha}.
\end{flalign*}
Therefore, $w =-W_{-1}(-e^{-1-1/\alpha})=:r_{\alpha}$.

Now, fix $\delta \in (0,r_\alpha-1)$. Thus,
\begin{flalign*}
I(r_\alpha - \delta) < I(r_\alpha) = \frac{1}{\alpha} < I(r_\alpha + \delta) .
\end{flalign*}
By (\ref{ldp u}) and (\ref{epsilon}), 
we have
\begin{flalign*}
\lim_{n \to\infty} \left\{\frac{1}{m}\log \Big(1-F_m\big((r_\alpha - \delta) m\big)\Big) - \frac{1}{m}\log \Big(1-F_m\big( u_{n,m }\big)\Big)\right\}&=-  I(r_\alpha - \delta) + \frac{1}{\alpha}>0. 
\end{flalign*}
Then, for sufficiently large $n$ we have
\begin{flalign*}
F_m\big((r_\alpha - \delta) m\big) < F_m\big( u_{n,m }\big).
\end{flalign*}
Since $F_m$ is non-decreasing, we conclude that for sufficiently large $n$ we have $(r_\alpha - \delta) m \leq u_{n,m }$. Analogously, since
\begin{flalign*}
	\lim_{n \to\infty} \left\{\frac{1}{m}\log \Big(1-F_m\big((r_\alpha + \delta) m\big)\Big) - \frac{1}{m}\log \Big(1-F_m\big( u_{n,m }\big)\Big)\right\}&=-  I(r_\alpha + \delta) + \frac{1}{\alpha}<0,
\end{flalign*}
we obtain that $u_{n,m } \leq (r_\alpha + \delta) m$ for sufficiently large $n$. Therefore, for large $n$ we have
\begin{flalign*}
\left|\frac{u_{n,m}}{m} - r_\alpha \right| \leq \delta.
\end{flalign*}
Taking $\delta \to 0$ we conclude that
\begin{flalign}\label{u=rm}
\lim_{n \to \infty}  \frac{u_{n,m}}{m} = r_\alpha.
\end{flalign}

Now we use this limit to conclude that for any $s \in \mathbb R$ we have
\begin{flalign*}
\lim_{n \to \infty }  \sum_{k=1}^{m-1} \frac{(m-1)(m-2)\cdots (m-k)}{(u_{n,m}+s)^k} = \frac{1}{r_\alpha-1}.
\end{flalign*}
Indeed, fix $s \in \mathbb R$. Since $\lim_{n \to \infty} \frac{u_{n,m}+s}{m}=r_\alpha>1$, there exists $\varepsilon \in (0,1)$ such that $(m-1) \leq (1-\varepsilon) (u_{n,m}+s)$ for sufficiently large $n$. In particular, for large $n$ we have
\begin{flalign*}
\mathbbm 1_{\{ 1 \leq k \leq m-1  \}}\frac{(m-1)(m-2)\cdots (m-k)}{(u_{n,m}+s)^k} \leq \frac{(m-1)^k}{(u_{n,m}+s)^k} \leq (1-\varepsilon)^k.
\end{flalign*}
Moreover, for fixed $k \in \mathbb N$ we have
\begin{flalign*}
\lim_{n \to \infty } \frac{\mathbbm 1_{\{ 1 \leq k \leq m-1  \}}(m-1)(m-2)\cdots (m-k)}{(u_{n,m}+s)^k}  = \frac{1}{r_\alpha^k}.
\end{flalign*}
Therefore, by the Dominated Convergence Theorem we conclude that
\begin{flalign*}
	\lim_{n \to \infty }  \sum_{k=1}^{m-1} \frac{(m-1)(m-2)\cdots (m-k)}{(u_{n,m}+s)^k} &=	\lim_{n \to \infty }  \sum_{k=1}^{\infty } \frac{\mathbbm 1_{\{ 1 \leq k \leq m-1  \}}(m-1)(m-2)\cdots (m-k)}{(u_{n,m}+s)^k}\\
	& = \sum_{k=1}^{\infty} \frac{1}{r_\alpha^k}\\
	&=\frac{1}{r_\alpha-1}.
\end{flalign*}
Now observe that
\begin{flalign*}
\sum_{k=0}^{m-1} \frac{(u_{n,m} + t)^k}{k!} = \frac{(u_{n,m} + t)^{m-1}}{(m-1)!} \left( 1 +  \sum_{k=1}^{m-1} \frac{(m-1)(m-2)\cdots (m-k)}{(u_{n,m}+t)^k}  \right)
\end{flalign*}
for every $t \in \mathbb R$ and sufficiently large $n$. Thus, by (\ref{1-F}), for sufficiently large $n$ we have
\begin{flalign*}
\frac{1-F_m(u_{n,m}+s)}{1-F_m(u_{n,m})}&=\frac{e^{-s} \sum_{k=0}^{m-1} \frac{(u_{n,m}+s)^k}{k!}}{ \sum_{k=0}^{m-1} \frac{(u_{n,m})^k}{k!}}\\
&=\frac{e^{-s}(u_{n,m} + s)^{m-1} \left( 1 +  \sum_{k=1}^{m-1} \frac{(m-1)(m-2)\cdots (m-k)}{(u_{n,m}+s)^k}  \right)}{(u_{n,m} )^{m-1} \left( 1 +  \sum_{k=1}^{m-1} \frac{(m-1)(m-2)\cdots (m-k)}{(u_{n,m})^k}  \right)} \\
&=\frac{e^{-s}\left(1 + \frac{s}{u_{n,m}}\right)^{u_{n,m}\left(\frac{m-1}{u_{n,m}}\right) } \left( 1 +  \sum_{k=1}^{m-1} \frac{(m-1)(m-2)\cdots (m-k)}{(u_{n,m}+s)^k}  \right)}{ \left( 1 +  \sum_{k=1}^{m-1} \frac{(m-1)(m-2)\cdots (m-k)}{(u_{n,m})^k}  \right)}.
\end{flalign*}
Therefore, for any $s \in \mathbb R$ we have
\begin{flalign*}
\lim_{n \to \infty} \frac{1-F_m(u_{n,m}+s)}{1-F_m(u_{n,m})}&= e^{-\left(  \frac{r_\alpha-1}{r_\alpha}  \right) s}.
\end{flalign*}
In particular, for $s= \theta \left(1 + \frac{1}{r_\alpha -1}\right)$ we obtain
\begin{flalign*}
	\lim_{n \to \infty} \frac{1-F_m\left(u_{n,m}+\theta \left(1 + \frac{1}{r_\alpha -1}\right) \right)}{1-F_m(u_{n,m})}&= e^{-\theta}.
\end{flalign*}
Since $F_m(u_{n,m})=1-1/n$, we conclude that
\begin{flalign*}
F_m \left(\frac{t_{n,m} + \theta w_{n,m}}{n} \right)=F_m\left(u_{n,m}+\theta \left(1 + \frac{1}{r_\alpha -1}\right) \right) = 1 - \frac{e^{-\theta} + o(1)}{n}.
\end{flalign*}
Last, by (\ref{u=rm}), we conclude that $w_{n,m}=o(t_{n,m})$ and $t_{n,m}=o(|w_{n,m}|^2)$.\\

\noindent \textbf{Case III ($\lim_{n \to \infty} \frac{m}{\log n}=+\infty$):} This regime is even more delicate because now we must understand how $u_{n,m}$ fluctuates around $m$. For that purpose, define $r_{n,m}:=-W_{-1}\big(-e^{-1-\log n/m}\big)$ and $\delta_{n,m}:=r_{n,m}-1$. Since 
$$
\delta_{n,m} - \log(1+ \delta_{n,m})=r_{n,m} -1 -\log r_{n,m}= \frac{\log n}{m}
$$
and $\lim_{n \to \infty} \frac{\log n}{m}=0$, we see that $\lim_{n \to \infty} r_{n,m} =1$ and $\lim_{n \to \infty} \delta_{n,m} = 0$. Moreover, since $\log (1+x) = x - \frac{x^2}{2} + O(x^3)$ when $x \to 0$, we have
\begin{flalign}\label{md2}
m \delta_{n,m}^2 = 2 \log n \,(1+o(1)).
\end{flalign}
Now we use (\ref{md2}) to estimate the right tail of $F_m$ at the point $m(1+c \delta_{n,m})$ for $c>0$. Indeed, let $c>0$. Then,
\begin{flalign}
1- F_m \Big(  m(1+c \delta_{n,m})  \Big) &= e^{-m(1+c \delta_{n,m})} \sum_{j=0}^{m-1} \frac{\big(m(1+c \delta_{n,m})\big)^j}{j!} \nonumber \\
&=e^{-m(1+c \delta_{n,m})} \frac{\big(m(1+c \delta_{n,m})\big)^{m-1}}{(m-1)!} \left( 1 +   \sum_{k=1}^{m-1} \frac{(m-1)\cdots (m-k)}{\big( m (1+c \delta_{n,m}) \big)^k}  \right). \label{quase la}
\end{flalign}
Observe that
\begin{flalign*}
1 +   \sum_{k=1}^{m-1} \frac{(m-1)\cdots (m-k)}{\big( m (1+ c\delta_{n,m}) \big)^k} \leq \sum_{k=0}^{+\infty} \frac{1}{(1+ c\delta_{n,m})^k} = \frac{1+c\delta_{n,m}}{c\delta_{n,m}}.
\end{flalign*}
Therefore, there exists a constant $C_1$, dependent on $c$, such that 
\begin{flalign}\label{C1}
	1 +   \sum_{k=1}^{m-1} \frac{(m-1)\cdots (m-k)}{\big( m (1+ c\delta_{n,m}) \big)^k} \leq  \frac{C_1}{\delta_{n,m}}.
\end{flalign}
Now let $K_n:=\left\lfloor 1/ \delta_{n,m} \right\rfloor$. By (\ref{md2}) we have 
\begin{flalign*}
\lim_{n \to \infty} \frac{K_n^2}{m} =0.
\end{flalign*}
Consequently, for every $k  \in \{ 1,\ldots,K_n\}$ we have $\prod_{j=1}^k \Big(1 - \frac{j}{m}\Big)=1+o(1)$. Hence, for sufficiently large $n$ we have
\begin{flalign*}
1 +   \sum_{k=1}^{m-1} \frac{(m-1)\cdots (m-k)}{\big( m (1+ c\delta_{n,m}) \big)^k} &= 1 + \sum_{k=1}^{m-1} \left( \prod_{j=1}^k \Big(1 - \frac{j}{m}\Big)   \right) \frac{1}{(1+ c\delta_{n,m})^k }\\
& \geq  1 + \sum_{k=1}^{K_n} \left( \prod_{j=1}^k \Big(1 - \frac{j}{m}\Big)   \right) \frac{1}{(1+ c\delta_{n,m})^k }\\
& \geq  \frac{1}{2}\sum_{k=0}^{K_n}  \frac{1}{(1+ c\delta_{n,m})^k }\\
&= \frac{1}{2} \left(  \frac{1-{(1+ c\delta_{n,m}) }^{-(K_n+1)}}{1-{(1+ c\delta_{n,m}) }^{-1}} \right).
\end{flalign*}
Since $\lim_{n \to \infty} K_n \delta_{n,m}=1$, we have $\lim_{n \to \infty}{(1+ c\delta_{n,m}) }^{-(K_n+1)}= e^{-c}$. Moreover, since $1-{(1+ c\delta_{n,m}) }^{-1} \leq c\delta_{n,m}$ we conclude that there exists a constant $C_0$, dependent on $c$, such that
\begin{flalign}\label{C0}
	1 +   \sum_{k=1}^{m-1} \frac{(m-1)\cdots (m-k)}{\big( m (1+ c\delta_{n,m}) \big)^k} \geq  \frac{C_0}{\delta_{n,m}}.
\end{flalign}
By (\ref{C1}) and (\ref{C0}) we conclude that
\begin{flalign*}
\log \left( 1 +   \sum_{k=1}^{m-1} \frac{(m-1)\cdots (m-k)}{\big( m (1+ c\delta_{n,m}) \big)^k}   \right) = - \log \delta_{n,m} +\mathcal O(1).
\end{flalign*}
Hence, after long computations using (\ref{quase la}), Stirling's formula and the above identity, we obtain
\begin{flalign*}
\log \left(1- F_m (  m(1+c \delta_{n,m})  )\right)&= -m(1+c \delta_{n,m}) + (m-1) \log ( m(1+c \delta_{n,m})   ) - \log (m-1)! - \log \delta_{n,m} +\mathcal O(1)\\
&= -m \big( c \delta_{n,m} - \log (1+ c \delta_{n,m})  \big) - \frac{1}{2} \log (m \delta_{n,m}^2) + \mathcal O (1).
\end{flalign*}
Finally, by (\ref{md2}) we see that $\log (m \delta_{n,m}^2)= \mathcal O (\log \log n)=o(\log n)$ and that
\begin{flalign*}
m \big( c \delta_{n,m} - \log (1+ c \delta_{n,m})  \big)=  \frac{c^2 }{2} m \delta_{n,m}^2 (1+o(1)) = c^2 \log n (1+o(1)).
\end{flalign*}
Therefore, for every $c>0$ we have
\begin{flalign}\label{tail exp}
\log \left(1- F_m \Big(  m(1+c \delta_{n,m})  \Big)\right)&= - c^2 \log n +o(\log n).
\end{flalign}

Now let $\varepsilon \in (0,1)$. By (\ref{tail exp}) we have
\begin{flalign*}
\log \left(1- F_m \Big(  m(1+(1-\varepsilon) \delta_{n,m})  \Big)\right)&= - (1-\varepsilon)^2 \log n +o(\log n).
\end{flalign*}
Since $(1-\varepsilon)^2<1$, for sufficiently large $n$ we have
\begin{flalign*}
1- F_m \Big(  m(1+(1-\varepsilon) \delta_{n,m})  \Big) > \frac{1}{n} = 1- F_m(u_{n,m}).
\end{flalign*}
Similarly, since $(1+\varepsilon)^2>1$, for sufficiently large $n$ we have
\begin{flalign*}
	1- F_m \Big(  m(1+(1+\varepsilon) \delta_{n,m})  \Big) < \frac{1}{n}=1- F_m(u_{n,m}).
\end{flalign*}
Therefore, for sufficiently large $n$ we have
\begin{flalign*}
F_m \Big(  m(1+(1-\varepsilon) \delta_{n,m})  \Big) < F_m(u_{n,m}) < F_m \Big(  m(1+(1+\varepsilon) \delta_{n,m})  \Big).
\end{flalign*}
Since $F_m$ is increasing, we conclude that for sufficiently large $n$ we have
\begin{flalign*}
1-\varepsilon\leq \frac{u_{n,m} -m}{m \delta_{n,m}} \leq  1+\varepsilon.
\end{flalign*}
Equivalently,
\begin{flalign}\label{limite fino}
\lim_{n \to \infty} \frac{u_{n,m} -m}{m \delta_{n,m}} = 1.
\end{flalign}

Now let $\theta \in \mathbb R$ and define $s_{n,m}(\theta)= \frac{\theta( 1+\delta_{n,m})}{\delta_{n,m}}$. By (\ref{md2}), we have
\begin{flalign}\label{pikachu}
\lim_{n \to \infty}\frac{s_{n,m}(\theta)}{m\delta_{n,m}}=\lim_{n \to \infty}\frac{\theta(1+ \delta_{n,m})}{m\delta_{n,m}^2}=0.
\end{flalign}
Therefore, by (\ref{limite fino}) and (\ref{pikachu}), we obtain
\begin{flalign}\label{raichu}
\lim_{n \to \infty} \frac{(u_{n,m} + s_{n,m}(\theta)) - m}{m \delta_{n,m}} =\lim_{n \to \infty}  \frac{u_{n,m}  - m}{m \delta_{n,m}} + \lim_{n \to \infty} \frac{  s_{n,m}(\theta)}{m \delta_{n,m}}=1.
\end{flalign}
Hence, by (\ref{md2}) and (\ref{raichu}), for every $\theta \in \mathbb R$ we observe that
\begin{flalign*}
\text{(i) } (u_{n,m} + s_{n,m}(\theta)) > m \text{ for sufficiently large } n;&&
\end{flalign*}
\begin{flalign*}
\text{(ii) } \lim_{n \to \infty}\frac{u_{n,m} + s_{n,m}(\theta)}{m} = 1 + \lim_{n \to \infty} \delta_{n,m}\frac{(u_{n,m} + s_{n,m}(\theta))-m}{m\delta_{n,m}} = 1;&&
\end{flalign*}
\begin{flalign*}
\text{(iii) } \lim_{n \to \infty}\frac{\big((u_{n,m} + s_{n,m}(\theta))-m\big)^2}{m} =\lim_{n \to \infty} m \delta_{n,m}^2 \left(\frac{(u_{n,m} + s_{n,m}(\theta))-m}{m \delta_{n,m}}\right)^2=+\infty.&&
\end{flalign*}
In what follows we use the above observations to analyze $F_m( u_{n,m} +s_{n,m}(\theta))$. We start proving that
\begin{flalign}\label{mew}
1 + \sum_{k=1}^m \frac{(m-1)(m-2)\cdots(m-k)}{(u_{n,m} +s_{n,m}(\theta))^k} = \frac{u_{n,m} +s_{n,m}(\theta)}{u_{n,m} +s_{n,m}(\theta) - m} (1+o(1)).
\end{flalign}
Indeed, for each $\theta \in \mathbb R$ define $q_{n,m}(\theta)= m / (u_{n,m} +s_{n,m}(\theta)) \in (0,1)$ (By observation (i), for fixed $\theta \in \mathbb R$, $q_{n,m}(\theta) \in (0,1)$ for sufficiently large $n$). For each $k \in \mathbb N_0$ define
\begin{flalign*}
a_{n,m,k}:=\begin{cases}
1 & \text{ if } k=0,\\
\prod_{j=1}^k \left(1 - \frac{j}{m}\right) &, \text{ if } k\in \{1,2,\ldots,m-1\},\\
0 & \text{ if } k\geq m.
\end{cases}
\end{flalign*}
Since $1 - \prod_{j=1}^k (1-b_j) \leq \sum_{j=1}^k b_j$, for every $b_1,b_2,\ldots,b_k \in [0,1]$ and every $k \in \mathbb N_0$, we have
\begin{flalign*}
0 \leq 1 - a_{n,m,k} \leq \frac{k(k+1)}{2m}
\end{flalign*}
for every $k \in \mathbb N_0$. Therefore,
\begin{flalign*}
0 &\leq \sum_{k=0}^\infty \big(q_{n,m}(\theta)\big)^k (1-a_{n,m,k})= \left(\sum_{k=0}^\infty \big(q_{n,m}(\theta)\big)^k \right) - \left(\sum_{k=0}^\infty \big(q_{n,m}(\theta)\big)^k a_{n,m,k} \right)\\
&=\frac{1}{1-q_{n,m}(\theta)} - \left( 1 + \sum_{k=1}^m \frac{(m-1)(m-2)\cdots(m-k)}{(u_{n,m} +s_{n,m}(\theta))^k}   \right)\\
& \leq \frac{1}{2m}\sum_{k=0}^\infty k (k+1)\big(q_{n,m}(\theta)\big)^k  = \frac{q_{n,m}(\theta)}{m (1-q_{n,m}(\theta))^3}.
\end{flalign*}
Multiplying the above inequalities by $(1-q_{n,m}(\theta))$ we obtain
\begin{flalign*}
0 \leq 1 - (1-q_{n,m}(\theta)) \left( 1 + \sum_{k=1}^m \frac{(m-1)(m-2)\cdots(m-k)}{(u_{n,m} +s_{n,m}(\theta))^k}   \right) \leq \frac{q_{n,m}(\theta)}{m (1-q_{n,m}(\theta))^2}.
\end{flalign*}
Therefore, by observations (ii) and (iii) we have
\begin{flalign*}
\lim_{n \to \infty} \frac{q_{n,m}(\theta)}{m (1-q_{n,m}(\theta))^2} =\lim_{n \to \infty}  \frac{u_{n,m} + s_{n,m}(\theta)}{m} \lim_{n \to \infty} \frac{m}{\big((u_{n,m} + s_{n,m}(\theta))-m\big)^2}=0.
\end{flalign*}
Hence,
\begin{flalign*}
1&= \lim_{n \to \infty} (1-q_{n,m}(\theta)) \left( 1 + \sum_{k=1}^m \frac{(m-1)(m-2)\cdots(m-k)}{(u_{n,m} +s_{n,m}(\theta))^k}   \right)\\
&=  \lim_{n \to \infty} \frac{(u_{n,m} +s_{n,m}(\theta))-m}{u_{n,m} +s_{n,m}(\theta)} \left( 1 + \sum_{k=1}^m \frac{(m-1)(m-2)\cdots(m-k)}{(u_{n,m} +s_{n,m}(\theta))^k}   \right),
\end{flalign*}
which proves (\ref{mew}). From this conclusion and since $s_{n,m}(0)=0$, we obtain
\begin{flalign*}
\frac{1-F_m(u_{n,m}+s_{n,m}(\theta))}{1-F_m(u_{n,m})} &= e^{-s_{n,m}(\theta)} \left(1 + \frac{s_{n,m}(\theta)}{u_{n,m}}\right)^{m-1} \frac{1 + \sum_{k=1}^m \frac{(m-1)(m-2)\cdots(m-k)}{(u_{n,m} +s_{n,m}(\theta))^k}}{1 + \sum_{k=1}^m \frac{(m-1)(m-2)\cdots(m-k)}{(u_{n,m} +s_{n,m}(0))^k}}\\
&=e^{-s_{n,m}(\theta)} \left(1 + \frac{s_{n,m}(\theta)}{u_{n,m}}\right)^{m-1} \frac{u_{n,m} +s_{n,m}(\theta)}{u_{n,m} } \frac{u_{n,m} - m}{(u_{n,m} -m) +s_{n,m}(\theta)} (1+o(1))\\
&=e^{-s_{n,m}(\theta)} \left(1 + \frac{s_{n,m}(\theta)}{u_{n,m}}\right)^{m-1} (1+o(1)).
\end{flalign*}
By (\ref{md2}), observation (i) and since $s_{n,m}(\theta)=\theta\mathcal O ( \delta_{n,m}^{-1})$ for every $\theta \in \mathbb R$, we have
\begin{flalign*}
 \frac{s_{n,m}(\theta)}{u_{n,m}} =  \theta (1+ \delta_{n,m}) \delta_{n,m} \frac{1}{m \delta_{n,m}^2}\frac{m}{u_{n,m}} =o(1)
\end{flalign*}
and
\begin{flalign*}
\frac{m |s_{n,m}(\theta)|^2}{|u_{n,m}|^2} =\mathcal O \left(\frac{1}{m \delta_{n,m}^2}\right) = \mathcal O \left(\frac{1}{\log n}\right).
\end{flalign*}
Therefore,
\begin{flalign*}
\log \left( \frac{1-F_m(u_{n,m}+s_{n,m}(\theta))}{1-F_m(u_{n,m})}  \right)& = - s_{n,m}(\theta) + (m-1) \log \left(  1 + \frac{s_{n,m}(\theta)}{u_{n,m}}  \right) + o(1)\\
&=- s_{n,m}(\theta) + (m-1) \frac{s_{n,m}(\theta)}{u_{n,m}} + \mathcal O \left( \frac{m |s_{n,m}(\theta)|^2}{|u_{n,m}|^2}  \right)\\
&= - s_{n,m}(\theta) \frac{u_{n,m} - m + 1}{u_{n,m}} + o(1)\\
&= - \theta (1+ \delta_{n,m}) \frac{m}{u_{n,m}} \left( \frac{u_{n,m}-m}{m \delta_{n,m}} + \frac{1}{m \delta_{n,m}^2}  \delta_{n,m}  \right) +o(1)\\
&=-\theta + o(1).
\end{flalign*}
Hence, 
\begin{flalign*}
\frac{1-F_m(u_{n,m}+s_{n,m}(\theta))}{1-F_m(u_{n,m})}= e^{-\theta}+o(1).
\end{flalign*}
From the definitions of $\delta_{n,m}$ and $s_{n,m}(\theta)$, and since $1-F_m(u_{n,m})=1/n$, we conclude that
\begin{flalign*}
	F_m \left(\frac{t_{n,m} + \theta w_{n,m}}{n} \right)=F_m\left(u_{n,m}+\theta \left(1 + \frac{1}{r_{n,m} -1}\right) \right) = 1 - \frac{e^{-\theta} + o(1)}{n}.
\end{flalign*}
Finally, by (\ref{md2}) and (\ref{limite fino}), we obtain
\begin{flalign*}
\frac{w_{n,m}}{t_{n,m}}&=\frac{1+\delta_{n,m}}{\delta_{n,m} u_{n,m}} = (1+\delta_{n,m}) \delta_{n,m} \frac{1}{m \delta_{n,m}^2} \left( \frac{u_{n,m}}{m} \right)^{-1}\\
&=(1+\delta_{n,m}) \delta_{n,m} \frac{1}{m \delta_{n,m}^2} \left( 1 + \delta_{n,m} \frac{u_{n,m} - m}{m \delta_{n,m}} \right)^{-1}=o(1)
\end{flalign*}
and
\begin{flalign*}
	\frac{t_{n,m}}{|w_{n,m}|^2}&=\frac{\delta_{n,m}^2 u_{n,m}}{n(1+\delta_{n,m})^2}\\
	&=\frac{u_{n,m}}{m} \frac{m \delta_{n,m}^2}{ \log n} \frac{1}{(1+\delta_{n,m})^2} \frac{ \log n}{n}\\
	&=\left( 1 + \delta_{n,m} \frac{u_{n,m} - m}{m \delta_{n,m}} \right) \frac{m \delta_{n,m}^2}{ \log n} \frac{1}{(1+\delta_{n,m})^2} \frac{ \log n}{n}=o(1).
\end{flalign*}
\end{proof}

The following result is an immediate consequence of (\ref{tau dist}) and Lemma~\ref{profile}.

\begin{corollary}\label{continuous}
Recall the definitions of $\mathcal G$, $t_{n,m}$, $w_{n,m}$ and $\tau_{n,m}$ given in (\ref{gumbel}), (\ref{tn}), (\ref{wn}) and (\ref{tau}), respectively. For every $\theta \in \mathbb R$, we have
\begin{flalign*}
	\lim_{n \to \infty} \mathbb P \left( \tau_{n,m} \leq  t_{n,m} +\theta w_{n,m} \right)=\mathcal G(\theta).
\end{flalign*}
\end{corollary}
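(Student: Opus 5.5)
The corollary follows by combining the closed form (\ref{tau dist}) for the law of $\tau_{n,m}$ with part I of Lemma~\ref{profile}. Fix $\theta\in\mathbb R$. By part II of Lemma~\ref{profile}, $w_{n,m}=o(t_{n,m})$, so $t_{n,m}+\theta w_{n,m}\geq 0$ for all sufficiently large $n$. For such $n$, (\ref{tau dist}) applies at $t=t_{n,m}+\theta w_{n,m}$ and gives
\begin{flalign*}
\mathbb P\left(\tau_{n,m}\leq t_{n,m}+\theta w_{n,m}\right)=\left(F_m\left(\frac{t_{n,m}+\theta w_{n,m}}{n}\right)\right)^n .
\end{flalign*}

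Next, substitute part I of Lemma~\ref{profile}, which says $F_m\big((t_{n,m}+\theta w_{n,m})/n\big)=1-\varepsilon_n/n$ with $\varepsilon_n:=e^{-\theta}+o(1)$. The probability above then equals $(1-\varepsilon_n/n)^n$. Since $\varepsilon_n\to e^{-\theta}$, we have $\varepsilon_n/n\to0$, and the expansion $\log(1-x)=-x+O(x^2)$ as $x\to0$ yields
\begin{flalign*}
n\log\left(1-\frac{\varepsilon_n}{n}\right)=-\varepsilon_n+O\left(\frac{\varepsilon_n^2}{n}\right)\longrightarrow -e^{-\theta}.
\end{flalign*}
Exponentiating, the probability converges to $e^{-e^{-\theta}}=\mathcal G(\theta)$.

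There is no real obstacle, since all the asymptotic work is contained in Lemma~\ref{profile}. The only point needing care is the positivity of the argument $t_{n,m}+\theta w_{n,m}$ when $\theta<0$, which is why (\ref{tau dist}) can only be invoked for large $n$; part II of the lemma handles this.
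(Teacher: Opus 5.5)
Your proposal is correct and is essentially the paper's argument: the paper calls the corollary an immediate consequence of (\ref{tau dist}) and part I of Lemma~\ref{profile}, and you have written out the elementary limit $(1-\varepsilon_n/n)^n\to e^{-e^{-\theta}}$ that it leaves implicit. Your positivity check via part II is harmless but not needed, because for $t\le 0$ both $\mathbb P(\tau_{n,m}\le t)$ and $\big(F_m(t/n)\big)^n$ equal zero.
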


Another consequence of Lemma~\ref{profile} is the following:

\begin{corollary}\label{expectation cont}
Recall the definitions of $\gamma$, $t_{n,m}$, $w_{n,m}$ and $\tau_{n,m}$ given in (\ref{euler}), (\ref{tn}), (\ref{wn}) and (\ref{tau}), respectively. We have
	\begin{flalign*}
		\mathbb E [\tau_{n,m}]= t_{n,m} + \gamma w_{n,m} + o(w_{n,m}).
	\end{flalign*}
\end{corollary}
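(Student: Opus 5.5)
We want $\mathbb E[\tau_{n,m}] = t_{n,m}+\gamma w_{n,m}+o(w_{n,m})$. Set $Z_n:=(\tau_{n,m}-t_{n,m})/w_{n,m}$. By Corollary~\ref{continuous}, $Z_n$ converges in distribution to a standard Gumbel variable $Z$, and $\mathbb E[Z]=\gamma$. It therefore suffices to prove that $\{Z_n\}$ is uniformly integrable, since then $\mathbb E[Z_n]\to\gamma$, which is exactly the claim. We write
\begin{flalign*}
\mathbb E[Z_n]=\int_0^\infty \mathbb P(Z_n>\theta)\,d\theta-\int_{-\infty}^0 \mathbb P(Z_n\le\theta)\,d\theta,
\end{flalign*}
and aim to apply dominated convergence to each integral, using the pointwise convergence from Corollary~\ref{continuous}. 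By (\ref{tau dist}) we have the exact formula $\mathbb P(Z_n\le\theta)=F_m(u_{n,m}+\theta v_{n,m})^n$, where $u_{n,m}=t_{n,m}/n$ and $v_{n,m}=w_{n,m}/n$. Note $v_{n,m}=1$ in Case I and $v_{n,m}=r/(r-1)$ with $r=r_\alpha$ or $r=r_{n,m}$ otherwise.

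\textbf{Right tail.} Since $\mathbb P(Z_n>\theta)\le n\bigl(1-F_m(u_{n,m}+\theta v_{n,m})\bigr)$, it suffices to bound the ratio $(1-F_m(u+s))/(1-F_m(u))$ uniformly. I would use log-concavity of the Gamma$(m,1)$ density for $m\ge1$. This makes the hazard rate $h=f_m/(1-F_m)$ nondecreasing, so
\begin{flalign*}
\frac{1-F_m(u+s)}{1-F_m(u)}\le e^{-h(u)s}\qquad (s\ge0).
\end{flalign*}
It remains to check that $h(u_{n,m})\,v_{n,m}$ is bounded below by a positive constant. From the representation $1-F_m(u)=f_m(u)\bigl(1+\sum_k\cdots\bigr)$ used in Lemma~\ref{profile}, we get $h(u_{n,m})\approx 1$ in Case I, $\approx (r_\alpha-1)/r_\alpha$ in Case II, and $\approx (u-m)/u$ in Case III, by (\ref{mew}) and (\ref{limite fino}). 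In each case $h(u_{n,m})v_{n,m}\to1$. Hence $\mathbb P(Z_n>\theta)\le e^{-\theta/2}$ for large $n$ and all $\theta\ge0$, which is an integrable dominating function.

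\textbf{Left tail.} Here I would use $F_m^n\le \exp\bigl(-n(1-F_m)\bigr)$. For $s<0$, monotonicity of the hazard again gives $(1-F_m(u+s))/(1-F_m(u))\ge e^{-h(u)s}=e^{h(u)|s|}$, provided the hazard on $[u+s,u]$ is at most $h(u)$, which holds since $h$ is nondecreasing. This yields $n(1-F_m(u+\theta v))\ge e^{c|\theta|}$ with $c\approx1$, so $\mathbb P(Z_n\le\theta)\le \exp(-e^{c|\theta|})$. This bound holds as long as $u+\theta v\ge0$. For $\theta$ below $-u/v$ the probability is zero, so no issue arises there. The dominating function is again integrable.

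\textbf{Main obstacle.} The key step is establishing $h(u_{n,m})v_{n,m}\to1$ uniformly across the three regimes; equivalently, $v_{n,m}$ is asymptotically the reciprocal hazard at the quantile. This is essentially contained in the computations of Lemma~\ref{profile}, which give $(1-F_m(u+s))/(1-F_m(u))\to e^{-s/v}$. I would extract it from those computations, or obtain it directly from $h(u)=f_m(u)/(1-F_m(u))$ together with the estimates (\ref{mew}) and its Case II analogue. Once this is in place, dominated convergence on both integrals finishes the argument.
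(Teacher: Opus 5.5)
\textbf{The left-tail step is wrong and the argument has a genuine gap there.} For $s<0$,
$\log(1-F_m(u+s))-\log(1-F_m(u))=\int_{u+s}^{u}h_m(x)\,dx$. The hypothesis $h_m\le h_m(u)$ on $[u+s,u]$ bounds this integral by $h_m(u)|s|$. That gives $(1-F_m(u+s))/(1-F_m(u))\le e^{h_m(u)|s|}$, which is the reverse of the inequality you need. Log-concavity of $1-F_m$ can only bound $1-F_m$ from above to the left of $u$, so it cannot give a lower bound on $n(1-F_m(u+\theta v))$.

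Your claimed majorant $\exp(-e^{c|\theta|})$ with $c\approx 1$ is in fact false. In Case III, let $\theta_0=-(u_{n,m}-m)/v_{n,m}$, so that $u_{n,m}+\theta_0 v_{n,m}=m$. By (\ref{md2}) and (\ref{limite fino}), $|\theta_0|=(2+o(1))\log n$, so your bound would give $\mathbb P(Z_n\le\theta_0)\le\exp(-n^{2c+o(1)})$. But $\mathbb P(Z_n\le\theta_0)=F_m(m)^n=e^{-(\log 2+o(1))n}$ by the central limit theorem.

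This matters because the left integral runs over $[-u_{n,m}/v_{n,m},0]$, whose length tends to infinity. A bounded majorant is not enough; you need one that decays.

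\textbf{The rest is sound.} Your right-tail bound is correct. Log-concavity of $f_m$ makes the hazard $h_m=f_m/(1-F_m)$ nondecreasing, so $n(1-F_m(u+\theta v))\le e^{-h_m(u)v\theta}$ for $\theta\ge0$. The limit $h_m(u)v\to1$ does follow from $1-F_m=f_m\bigl(1+\sum_k\cdots\bigr)$ and the asymptotics of that sum obtained in Lemma~\ref{profile}. The paper instead uses the chord of the concave function $\log(1-F_m)$ between $\theta=0$ and $\theta=1$. That needs only part I of Lemma~\ref{profile} at $\theta=1$ and avoids hazard asymptotics altogether.

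\textbf{The fix} is to use log-concavity of $F_m$ itself, i.e.\ the reversed hazard $f_m/F_m$ is nonincreasing. The tangent-line inequality for the concave function $\log F_m$ gives, for $\theta<0$,
$\log F_m(u+\theta v)\le \log F_m(u)-|\theta|\,v\,f_m(u)/F_m(u)$. Since $n(1-F_m(u))=1$, this yields
$\mathbb P(Z_n\le\theta)\le \exp\bigl(-|\theta|\,h_m(u)v/F_m(u)\bigr)\le e^{-|\theta|/2}$ for large $n$. That is only exponential decay, not double-exponential, but it is integrable. This is exactly the paper's route, carried out with a chord anchored at $\theta=-1$ and the constant $c_{n,m}\to e-1$.
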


\begin{proof}
For each $x \in \mathbb R$ define
\begin{flalign*}
G_m(x):=1-F_m(x) \,\,\, \text{ and } \,\,\,f_m(x)= \begin{cases}
e^{-x} \frac{x^{m-1}}{(m-1)!} &, \text{ if } x> 0,\\
0 &, \text{ if } x \leq 0.
\end{cases}
\end{flalign*}
Observe that $\log F_m$ and $\log G_m$ are concave functions because their derivatives are non-increasing functions. We will use these observations to control some tail probabilities.

Let $\theta \geq 1 $. Observe that 
\begin{flalign*}
\frac{t_{n,m}+ w_{n,m}}{n} \in \left[ \frac{t_{n,m}}{n}\,,\, \frac{t_{n,m}+ \theta w_{n,m}}{n}\right]
\end{flalign*}
and that
\begin{flalign*}
\frac{t_{n,m}+  w_{n,m}}{n} = \left(1 - \frac{1}{\theta}\right) \frac{t_{n,m}}{n} + \frac{1}{\theta} \frac{t_{n,m}+ \theta w_{n,m}}{n}.
\end{flalign*}
By the concavity of $\log G_m$ we have
\begin{flalign*}
\log G_m \left( \frac{t_{n,m}+ w_{n,m}}{n}  \right) \geq \left(1 - \frac{1}{\theta}\right) \log G_m \left( \frac{t_{n,m}}{n} \right) + \frac{1}{\theta} \log G_m \left( \frac{t_{n,m}+ \theta w_{n,m}}{n}  \right).
\end{flalign*}
Multiplying both sides above by $\theta$ and reorganizing the terms, we obtain
\begin{flalign*}
\log G_m\left(\frac{t_{n,m}+ \theta w_{n,m}}{n}\right) - \log G_m\left(\frac{t_{n,m}}{n}\right) \leq \theta \left(  \log G_m\left(\frac{t_{n,m}+  w_{n,m}}{n}\right) - \log G_m\left(\frac{t_{n,m}}{n}\right)   \right).
\end{flalign*}
Consequently,
\begin{flalign*}
\frac{G_m\left(\frac{t_{n,m}+ \theta w_{n,m}}{n}\right)}{G_m\left(\frac{t_{n,m}}{n}\right)} \leq \left( \frac{G_m\left(\frac{t_{n,m}+  w_{n,m}}{n}\right)}{G_m\left(\frac{t_{n,m}}{n}\right)} \right)^\theta.
\end{flalign*}
Recalling the definition of $t_{n,m}$ we see that $ G_m\left(\frac{t_{n,m}}{n}\right)= 1/n$. Therefore,
\begin{flalign*}
	n \,{G_m\left(\frac{t_{n,m}+ \theta w_{n,m}}{n}\right)} \leq \left( n\,{G_m\left(\frac{t_{n,m}+  w_{n,m}}{n}\right)} \right)^\theta.
\end{flalign*}
By Lemma~\ref{profile} we have $G_m((t_{n,m}+w_{n,m})/n)=(e^{-1}+o(1))/n$. Then,
\begin{flalign*}
	n \,{G_m\left(\frac{t_{n,m}+ \theta w_{n,m}}{n}\right)} \leq \left(e^{-1} +o(1) \right)^\theta.
\end{flalign*}
Hence, for every $\theta \geq 1$ and every sufficiently large $n$ we have
\begin{flalign}
\mathbb P \left(\frac{\tau_{n,m} - t_{n,m}}{w_{n,m}} > \theta\right)&= 1 - \left(F_m\left(\frac{t_{n,m}+ \theta w_{n,m}}{n}\right) \right)^n \nonumber \\
& \leq n G_m\left(\frac{t_{n,m}+ \theta w_{n,m}}{n}\right) \nonumber \\
& \leq e^{-\theta/2}. \label{right tail}
\end{flalign}

Now observe that $t_{n,m} > w_{n,m}$ for every sufficiently large $n$. Let $c=(e-1)>0$ and define
\begin{flalign*}
c_{n,m}:= n \left( \log F_m \left(\frac{t_{n,m}}{n}\right) - \log F_m \left(\frac{t_{n,m} - w_{n,m}}{n}\right)     \right).
\end{flalign*}
From the definition of $t_{n,m}$ and by Lemma~\ref{profile} we have
\begin{flalign*}
F_m \left(\frac{t_{n,m}}{n}\right)=1-\frac{1}{n} \,\,\,\text{ and } \,\,\, F_m \left(\frac{t_{n,m} - w_{n,m}}{n}\right)=1-\frac{e+o(1)}{n}.
\end{flalign*}
Therefore, $\lim_{n \to \infty} c_{n,m} = c$.

Let $\theta \geq 1 $. Observe that 
\begin{flalign*}
	\frac{t_{n,m}- w_{n,m}}{n} \in \left[ \frac{t_{n,m}- \theta w_{n,m}}{n}\,,\, \frac{t_{n,m}}{n}\right]
\end{flalign*}
and that
\begin{flalign*}
	\frac{t_{n,m}-  w_{n,m}}{n} = \frac{1}{\theta} \frac{t_{n,m}- \theta w_{n,m}}{n} + \left(1 - \frac{1}{\theta}\right) \frac{t_{n,m}}{n} .
\end{flalign*}
By the concavity of $\log F_m$ we have
\begin{flalign*}
	\log F_m \left( \frac{t_{n,m}- w_{n,m}}{n}  \right) \geq  \frac{1}{\theta} \log F_m \left( \frac{t_{n,m}- \theta w_{n,m}}{n}  \right)+ \left(1 - \frac{1}{\theta}\right) \log F_m \left( \frac{t_{n,m}}{n} \right).
\end{flalign*}
Multiplying both sides above by $\theta$ and reorganizing the terms, we obtain
\begin{flalign*}
\log F_m \left( \frac{t_{n,m}- \theta w_{n,m}}{n}  \right)& \leq \theta \log F_m \left( \frac{t_{n,m}-  w_{n,m}}{n}  \right) - (\theta -1 ) \log F_m \left( \frac{t_{n,m}}{n}  \right)\\
& =\log F_m \left( \frac{t_{n,m}-  w_{n,m}}{n}  \right) - (\theta-1) \frac{c_{n,m}}{n}.
\end{flalign*}
Consequently,
\begin{flalign*}
\left(F_m \left( \frac{t_{n,m}- \theta w_{n,m}}{n}  \right) \right)^n \leq \left(F_m \left( \frac{t_{n,m}- w_{n,m}}{n}  \right) \right)^n\,e^{-(\theta-1) c_{n,m}}
\end{flalign*}
Hence, for every $\theta \geq 1$ and every sufficiently large $n$ we have
\begin{flalign}
	\mathbb P \left(\frac{\tau_{n,m} - t_{n,m}}{w_{n,m}} < - \theta\right)&= \left(F_m\left(\frac{t_{n,m}- \theta w_{n,m}}{n}\right) \right)^n \nonumber \\
	& \leq \left(F_m \left( \frac{t_{n,m}- w_{n,m}}{n}  \right) \right)^n\,e^{-(\theta-1) c_{n,m}} \nonumber \\
	& \leq e^{-c(\theta-1)/2}. \label{left tail}
\end{flalign}
Due to (\ref{right tail}) and (\ref{left tail}), we can apply the Dominated Convergence Theorem to conclude that
\begin{flalign*}
\lim_{n \to +\infty}\mathbb E \left[  \frac{\tau_{n,m} - t_{n,m}}{w_{n,m}}   \right]&= \int_0^{+\infty} \lim_{n \to +\infty} \mathbb P \left( \frac{\tau_{n,m} - t_{n,m}}{w_{n,m}} > \theta   \right) d\theta - \int_0^{+\infty} \lim_{n \to +\infty} \mathbb P \left( \frac{\tau_{n,m} - t_{n,m}}{w_{n,m}} < - \theta   \right) d\theta\\
&=\int_0^{+\infty} (1-\mathcal G (\theta)) d\theta - \int_0^{+\infty} \mathcal G (-\theta) d\theta \\
&= \gamma.
\end{flalign*}
\end{proof}

We can finally prove Theorem~\ref{main result}.

\begin{proof}[Proof of Theorem~\ref{main result}]
Let $\mathcal A_n=\{ (m,\ldots,m)  \} \subset \Omega_{n,m}$. By (\ref{T}) and (\ref{tau}), and since $\{Y_t^{n,m}; t \geq 0\}$ is the poissonization of $\{X_\ell^{n,m}; \ell \in \mathbb N_0  \}$, we can use Lemma~\ref{dePoissonization} together with Corollary~\ref{continuous} to obtain the limit in (\ref{erdos}), solving the question posed by Erd\H{o}s and Rényi. Furthermore, by Proposition~\ref{expectations} and Corollary~\ref{expectation cont}, we obtain (\ref{expect}).
\end{proof}

\appendix

\section{Total variation distance to absorption and survival probability}\label{A}

In this appendix we prove the equivalence between analyzing the distribution of the absorption time of an absorbing finite Markov chain with fixed initial state and analyzing the total variation distance between its law and its target stationary distribution.

\begin{proposition}\label{dtv}
Let $\{ X_\ell ; \ell \in \mathbb N_0  \}$ be a discrete-time absorbing Markov chain on a finite state space $\Omega$ with fixed initial state $x_0 \in \Omega$. Let $\mathcal A$ be the set of all absorbing states. Let $T=\min\{ \ell \in \mathbb N_0;  X_\ell \in \mathcal A \}$ be the absorption time of the process. Let $\pi$ be the target stationary measure of $\{ X_\ell ; \ell \in \mathbb N_0  \}$ for which $\mu_\ell \to \pi$ as $\ell \to \infty$. For every $\ell \in \mathbb N_0$ we have
\begin{flalign*}
	\| \mu_\ell - \pi \|_{\text{TV}}  &=\mathbb P(T > \ell).
\end{flalign*}
\end{proposition}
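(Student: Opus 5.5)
Although the statement is general, in the setting of interest the reachable absorbing set from $x_0$ is what matters. The plan is to identify $\pi$ explicitly, compare it with $\mu_\ell$ set by set, and use absorption to get the total variation distance exactly.

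\textbf{Identifying $\pi$.} Since $\Omega$ is finite and every state can eventually reach $\mathcal A$ (this is implicit in the assumption that $\mu_\ell$ converges to a target stationary measure supported on absorbing states), we have $T<\infty$ almost surely. By dominated convergence,
\begin{flalign*}
\pi(a)=\lim_{\ell\to\infty}\mathbb P(X_\ell=a)=\mathbb P(X_T=a),\qquad a\in\mathcal A,
\end{flalign*}
and $\pi(x)=0$ for $x\notin\mathcal A$.

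\textbf{Decomposing $\mu_\ell$.} On the event $\{T\leq \ell\}$ the chain is frozen, so $X_\ell=X_T$. Hence, for every $x\in\Omega$,
\begin{flalign*}
\mu_\ell(x)=\mathbb P(X_\ell=x,\,T\leq\ell)+\mathbb P(X_\ell=x,\,T>\ell).
\end{flalign*}
For $a\in\mathcal A$ the second term vanishes, since $\{X_\ell=a\}$ forces $T\leq \ell$. Therefore $\mu_\ell(a)=\mathbb P(X_T=a,\,T\le \ell)\le \pi(a)$. For $x\notin\mathcal A$ we simply have $\mu_\ell(x)=\mathbb P(X_\ell=x,\,T>\ell)$ while $\pi(x)=0$.

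\textbf{Computing the distance.} I would use the formula $\|\mu-\pi\|_{\text{TV}}=\sum_{x}(\mu(x)-\pi(x))^+$. By the previous step the positive part is zero on $\mathcal A$ and equals $\mu_\ell(x)$ off $\mathcal A$. Summing gives
\begin{flalign*}
\|\mu_\ell-\pi\|_{\text{TV}}=\mu_\ell(\Omega\setminus\mathcal A)=\mathbb P(X_\ell\notin\mathcal A)=\mathbb P(T>\ell).
\end{flalign*}

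\textbf{Consistency check.} Equivalently, one can verify $\sum_{a\in\mathcal A}\big(\pi(a)-\mu_\ell(a)\big)=\mathbb P(T>\ell)$ directly. Indeed, $\pi(a)-\mu_\ell(a)=\mathbb P(X_T=a,\,T>\ell)$, and summing over $a\in\mathcal A$ gives $\mathbb P(T>\ell)$, matching the other half of the total variation formula.

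\textbf{Main obstacle.} The only delicate point is justifying $T<\infty$ almost surely, so that $\pi$ is exactly the law of $X_T$. If needed, I would prove it from finiteness of $\Omega$: from every state in the communicating structure there is a path of some length at most $|\Omega|$ into $\mathcal A$, with probability at least some $\rho>0$. This gives the geometric bound $\mathbb P(T>k|\Omega|)\leq(1-\rho)^k$. In the coupon collector case this is immediate.
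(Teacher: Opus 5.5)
Your proof is correct and follows the paper's argument. The paper likewise identifies $\pi$ as the law of $X_T$ on $\mathcal A$, observes that $\mu_\ell(x)=\mathbb P(T\le \ell,\,X_T=x)\le\pi(x)$ there, and evaluates $\frac12\sum_x|\mu_\ell(x)-\pi(x)|$ by splitting over $\mathcal A$ and its complement; your ``consistency check'' is exactly its computation on $\mathcal A$, and your positive-part formula is an equivalent way to finish. Your geometric argument for $T<\infty$ almost surely is a useful addition, since the paper uses $\sum_{x\in\mathcal A}\pi(x)=1$ without comment.
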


\begin{proof}
Let $\ell \in \mathbb N_0$. Observe that 
\begin{flalign*}
\pi(x)=\begin{cases}
\mathbb P (X_T = x|X_0 =x_0) &, \text{ if } x \in \mathcal A,\\
0 &, \text{ if } x \notin \mathcal A.
\end{cases}
\end{flalign*}
Therefore, if $x \in \mathcal A$ then
\begin{flalign*}
\mu_\ell(x)=\mathbb P (T \leq \ell, X_T=x) \leq \pi(x).
\end{flalign*}
Hence,
\begin{flalign*}
\| \mu_\ell - \pi \|_{\text{TV}} &= \frac{1}{2}\sum_{x \in \Omega} |\mu_\ell(x) - \pi(x)|\\
&= \frac{1}{2}\left[\left( \sum_{x \notin \mathcal A} |\mu_\ell(x) - \pi(x)|  \right) + \left( \sum_{x \in \mathcal A} |\mu_\ell(x) - \pi(x)|  \right)\right]\\
&= \frac{1}{2}\left[\left( \sum_{x \notin \mathcal A} \mu_\ell(x)  \right) + \left( \sum_{x \in \mathcal A}  \big(\pi(x) - \mu_\ell(x)\big)  \right)\right]\\
&= \frac{1}{2}\left[\mathbb P (T>\ell) + \left( 1 - \mathbb P (T\leq \ell)  \right)\right]\\
&=\mathbb P (T>\ell).
\end{flalign*}
\end{proof}

\section{Product law of the continuous-time coupon collector chain}\label{B}

In this appendix we prove that, for fixed $t \geq 0$, the random variables $Y_t^{n,m}(1),\ldots,Y_t^{n,m}(n)$, defined in Section~\ref{main arg}, are independent and identically distributed with probability mass function $p_t$ defined in (\ref{pt}):

\begin{proposition}\label{thinning}
Let $\{Y_t^{n,m}; t \geq 0\}$ be the continuous-time Markov chain defined in Section~\ref{main arg}. For each $t \geq 0$ define $\mu_t^{n,m} = \mathbb P (Y_t^{n,m} \in \cdot)$. For every $t \geq 0$ and every $x \in \Omega_{n,m}$ we have
	\begin{flalign*}
		\mu_t^{n,m} (x) = \prod_{i=1}^{n} \left\{ \mathbbm 1_{\{ x_i <m   \}}\,e^{-t/n} \frac{(t/n)^{x_i}}{x_i!}   + \mathbbm 1_{\{ x_i =m   \}} \left( 1 - \,e^{-t/n} \sum_{j=0}^{m-1}\frac{(t/n)^{j}}{j!} \right) \right\}.
	\end{flalign*}
\end{proposition}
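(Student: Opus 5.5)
The paper announces that this appendix uses Yau's relative entropy method, but the cleanest route is to exhibit the product measure and check that it solves the forward (Kolmogorov) equation. Denote the proposed law by $\nu_t(x)=\prod_{i=1}^n p_t(x_i)$, with $p_t$ as in (\ref{pt}). Since $\Omega_{n,m}$ is finite, the forward equation $\frac{d}{dt}\mu_t = \mu_t \mathfrak L_{n,m}$ with $\mu_0=\delta_{(0,\ldots,0)}$ has a unique solution. It therefore suffices to check two things: $\nu_0=\delta_{(0,\ldots,0)}$, which is immediate because $p_0=\delta_0$, and $\frac{d}{dt}\nu_t=\nu_t\mathfrak L_{n,m}$. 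Equivalently, for every test function $f$,
\[
\frac{d}{dt}\sum_x \nu_t(x) f(x)=\sum_x \nu_t(x)\,\mathfrak L_{n,m}f(x).
\]

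The key structural observation is that $\mathfrak L_{n,m}=\sum_{i=1}^n L_i$, where $L_i$ acts only on the $i$-th coordinate. Explicitly, $L_i f(x)=\frac1n\mathbbm 1_{\{x_i<m\}}(f(x^{i,+})-f(x))$, which is the generator of a rate-$1/n$ counting process on $\{0,\ldots,m\}$ absorbed at $m$. Because the generator splits coordinatewise, the forward equation for the product $\nu_t$ reduces, by the product rule, to the one-dimensional equations
\[
\frac{d}{dt}p_t = p_t \ell,
\qquad \ell g(k)=\tfrac1n\mathbbm 1_{\{k<m\}}\big(g(k+1)-g(k)\big).
\]
Written out, these are: $\frac{d}{dt}p_t(0)=-\frac1n p_t(0)$; $\frac{d}{dt}p_t(k)=\frac1n\big(p_t(k-1)-p_t(k)\big)$ for $1\le k\le m-1$; and $\frac{d}{dt}p_t(m)=\frac1n p_t(m-1)$.

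The main computation is verifying these ODEs for the explicit $p_t$. For $k<m$, differentiating $e^{-t/n}(t/n)^k/k!$ gives exactly $\frac1n\big(p_t(k-1)-p_t(k)\big)$, with the convention $p_t(-1)=0$. For $k=m$, the derivative of $1-e^{-t/n}\sum_{j<m}(t/n)^j/j!$ telescopes to $\frac1n e^{-t/n}(t/n)^{m-1}/(m-1)!=\frac1n p_t(m-1)$.

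Then I would assemble the pieces. Writing $\nu_t=\otimes_i p_t$, the product rule gives $\frac{d}{dt}\nu_t=\sum_i p_t\otimes\cdots\otimes(p_t\ell)\otimes\cdots\otimes p_t$, with $p_t\ell$ in the $i$-th slot. Summing the one-dimensional identities, this equals $\nu_t\sum_i L_i=\nu_t\mathfrak L_{n,m}$, and uniqueness concludes.

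No step is genuinely hard. The only point needing care is the bookkeeping showing that the product of one-coordinate forward equations gives the full forward equation, and this is exactly where the coordinatewise form of $\mathfrak L_{n,m}$ is used. If one insists on Yau's framework, the same computation shows $\frac{d}{dt}H(\mu_t\,|\,\nu_t)\le 0$ with $H(\mu_0\,|\,\nu_0)=0$, which again forces $\mu_t=\nu_t$.
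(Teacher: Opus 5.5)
Your proof is correct, but it concludes differently from the paper. The paper never writes down the forward equation. It computes, for the product measure $\nu_t^{n,m}$, the pointwise identity $\mathfrak L_{n,m}^{\ast,\nu_t^{n,m}}\mathbbm 1(x)=\frac{d}{dt}\log\psi_t^{n,m}(x)$. It feeds this into Yau's inequality (quoted from Jara--Menezes) to get $\frac{d}{dt}D_{\text{KL}}(\mu_t^{n,m}\|\nu_t^{n,m})\le 0$ for $t>0$, and then uses $D_{\text{KL}}(\mu_0^{n,m}\|\nu_0^{n,m})=0$.

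By (\ref{adjoint}), $\mathfrak L^{\ast,\nu}\mathbbm 1(x)=(\nu\mathfrak L)(x)/\nu(x)$, with $\nu\mathfrak L$ the forward action on measures. Also $\frac{d}{dt}\log\psi_t=\frac{d}{dt}\log\nu_t$. So the paper's identity is exactly your equation $\frac{d}{dt}\nu_t=\nu_t\mathfrak L_{n,m}$ divided by $\nu_t$, and the core computation is the same. You organize it through $\mathfrak L_{n,m}=\sum_i L_i$ and the one-dimensional ODEs, which makes the bookkeeping more transparent.

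The real difference is the last step: you invoke uniqueness for a finite linear ODE system, while the paper argues via relative entropy. Your route is more elementary and self-contained. It also avoids a point the paper passes over: $\nu_0^{n,m}$ is a Dirac mass, not a positive measure as the Yau framework assumes. So going from the entropy at $t=0$ to $t>0$ tacitly requires right-continuity of $t\mapsto D_{\text{KL}}(\mu_t^{n,m}\|\nu_t^{n,m})$ at $0$.

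What the paper's route buys is robustness. The same inequality gives quantitative entropy bounds when the reference measure only approximately solves the forward equation, which is the setting where Yau's method is normally used.
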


We prove Proposition~\ref{thinning} using one of the most interesting tools of information theory, the relative entropy. For that purpose, let $\mu$ and $\nu$ be probability measures defined on a finite space $\Omega$. The Kullback-Leibler divergence, or relative entropy, of $\mu$ from $\nu$ is defined as
\begin{flalign*}
	D_{\text{KL}} (\mu || \nu) = \sum_{x \in \Omega} \mu(x) \log \frac{\mu(x)}{\nu(x)},
\end{flalign*}
with the conventions that $0 \log \frac{0}{0}=0$, $0 \log \frac{0}{q}=0$ and $p \log \frac{p}{0}=\infty$ for $p,q>0$. 

In what follows, we use the outstanding Yau's relative entropy method~\cite{yau} to characterize the law of the process $\{Y_t^{n,m}; t \geq 0  \}$. To do so, let us consider a continuous-time Markov chain $\{Y_t; t \geq 0\}$ with finite state space $\Omega$. Let $\mu_t:=\mathbb P (Y_t \in \cdot)$. Let $\nu$ be a positive measure on $\Omega$ ($\nu(x)>0$ for every $x \in \Omega$) possibly time dependent, and let $\overline \nu$ be a positive measure on $\Omega$ which is not time dependent. Assume that the infinitesimal generator $\mathfrak L$ of $\{Y_t; t \geq 0\}$ is given by 
\begin{flalign*}
	\mathfrak L f(x)=  \sum_{y \in \Omega} K(x,y) \left( f(y) -f(x) \right)
\end{flalign*}
for every function $f:\Omega \to \mathbb R$ and every $x \in \Omega$. Let $\mathfrak L^{\ast,\nu}$ be the adjoint of $\mathfrak L$ with respect to the inner product defined by $\< f,g\>_{\nu}=\int f \cdot g\, d\nu$ for every $f,g: \Omega \to \mathbb R$. A simple computation shows that
\begin{flalign}\label{adjoint}
	\mathfrak L^{\ast,\nu} f (x) = \sum_{y \in \Omega} \left\{ K(y,x) \,f(y)\,\frac{\nu(y)}{\nu(x)} - K(x,y)\,f(x)  \right\}.
\end{flalign}
Let $f_t$ be the Radon–Nikodym derivative of $\mu_t$ with respect to $\nu$. Let $\psi_t$ be the Radon–Nikodym derivative of $\nu$ with respect to $\overline \nu$. We use the following result, that can be found in~\cite{main lemma}, to prove Proposition~\ref{thinning}:

\begin{proposition}[Yau's inequality]\label{yau}
	For every $t > 0$,
	\begin{flalign*}
		\frac{d}{dt} 	D_{\text{KL}} (\mu_t || \nu) \leq 
		\mathbb E \left[ \mathfrak L^{\ast,\nu} \mathbbm 1 (Y_t) - \frac{d}{dt} \log \psi_t (Y_t) \right].
	\end{flalign*}
\end{proposition}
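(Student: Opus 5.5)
The plan is a direct computation of the time derivative of the relative entropy, followed by one elementary convexity inequality. Write $\nu_t$ for the possibly time-dependent reference measure, so that $\nu_t=\psi_t\,\overline\nu$ and $f_t=\mu_t/\nu_t$. Since $\Omega$ is finite, $D_{\text{KL}}(\mu_t\|\nu_t)=\sum_{x}\mu_t(x)\log\mu_t(x)-\sum_x\mu_t(x)\log\nu_t(x)$ is a finite sum of differentiable functions of $t$. Here $\mu_t$ is smooth in $t$ as the solution of the Kolmogorov equations, and we assume $t\mapsto\psi_t$ is differentiable, as implicit in the statement. Differentiating term by term gives
\begin{flalign*}
\frac{d}{dt}D_{\text{KL}}(\mu_t\|\nu_t)=\sum_x \mu_t'(x)\log f_t(x)+\sum_x\mu_t'(x)-\sum_x\mu_t(x)\,\frac{d}{dt}\log\nu_t(x).
\end{flalign*}
The middle sum vanishes because $\sum_x\mu_t(x)=1$ for all $t$. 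In the last sum, $\overline\nu$ does not depend on $t$, so $\frac{d}{dt}\log\nu_t=\frac{d}{dt}\log\psi_t$. This term is therefore exactly $-\mathbb E\big[\frac{d}{dt}\log\psi_t(Y_t)\big]$.

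For the first sum I would use the forward equation in the weak form $\sum_x\mu_t'(x)g(x)=\sum_x\mu_t(x)\,\mathfrak L g(x)$ with $g=\log f_t$ (frozen at time $t$). This gives
\begin{flalign*}
\sum_x\mu_t'(x)\log f_t(x)=\sum_x\nu_t(x)f_t(x)\sum_y K(x,y)\big(\log f_t(y)-\log f_t(x)\big).
\end{flalign*}
Next I would apply the elementary inequality $a(\log b-\log a)\le b-a$ for $a,b\ge0$, which follows from $\log u\le u-1$. Since $K(x,y)\ge0$, this yields the pointwise bound $f_t\,\mathfrak L\log f_t\le \mathfrak L f_t$. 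Hence the first sum is at most $\langle \mathfrak L f_t,\mathbbm 1\rangle_{\nu_t}=\langle f_t,\mathfrak L^{\ast,\nu_t}\mathbbm 1\rangle_{\nu_t}=\sum_x\mu_t(x)\,\mathfrak L^{\ast,\nu_t}\mathbbm 1(x)=\mathbb E\big[\mathfrak L^{\ast,\nu}\mathbbm 1(Y_t)\big]$, using the definition of the adjoint and the formula (\ref{adjoint}). Adding the two contributions gives the claimed inequality.

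The only delicate point is the set where $\mu_t(x)=0$, where $\log f_t(x)$ is undefined. I would handle it as follows. For a finite chain, $\mu_t(x)$ is real-analytic in $t$, so the set $\{x:\mu_t(x)=0\}$ is constant on $(0,\infty)$ up to isolated times, and $\mu_t'(x)=0$ on that constant set. The sums can therefore be restricted to the support of $\mu_t$, with the convention $0\log0=0$. The inequality $a(\log b-\log a)\le b-a$ is applied only with $a=f_t(x)>0$. When $b=f_t(y)=0$, the left side is $-\infty$, so the bound still holds in the extended sense. Alternatively, one can simply take the estimate on $\mathfrak L\log f_t$ as holding on the support, since only those $x$ are weighted by $\mu_t$. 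I expect this boundary bookkeeping to be the main, if minor, obstacle. The rest is the standard entropy-production computation.
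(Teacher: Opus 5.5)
Your proof is correct. The paper does not prove this proposition itself; it quotes it from the Jara--Menezes reference, so there is no in-paper argument to compare with. What you give is the standard entropy-production computation behind that lemma, and it has the advantage of making Appendix~\ref{B} self-contained.

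The steps all check out:
\begin{itemize}
\item $\frac{d}{dt}\log\nu_t=\frac{d}{dt}\log\psi_t$, because $\overline\nu$ is fixed.
\item The weak forward equation turns $\sum_x\mu_t'(x)\log f_t(x)$ into $\sum_x\mu_t(x)\,\mathfrak L\log f_t(x)$.
\item The bound $a\log(b/a)\le b-a$ gives $f_t\,\mathfrak L\log f_t\le\mathfrak L f_t$.
\item The duality $\langle\mathfrak L f_t,\mathbbm 1\rangle_{\nu_t}=\langle f_t,\mathfrak L^{\ast,\nu_t}\mathbbm 1\rangle_{\nu_t}$ is consistent with (\ref{adjoint}).
\item For $x$ outside the support of $\mu_t$, $\nu_t(x)\mathfrak L f_t(x)=\nu_t(x)\sum_yK(x,y)f_t(y)\ge0$, so extending that sum to all of $\Omega$ only enlarges the bound.
\end{itemize}

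You can get a slightly sharper result. If you use $\log u\le 2(\sqrt u-1)$ instead of $\log u\le u-1$, i.e.\ $a\log(b/a)\le (b-a)-(\sqrt b-\sqrt a)^2$, you obtain the form in which this inequality is usually stated. It has the extra nonpositive term $-\sum_{x,y}\nu_t(x)K(x,y)\big(\sqrt{f_t(y)}-\sqrt{f_t(x)}\big)^2$ on the right. The statement here simply discards that term, which costs nothing in Proposition~\ref{thinning}, where the right-hand side vanishes identically.

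Your handling of the zeros of $\mu_t$ can be tightened, because there are no exceptional times at all.
\begin{itemize}
\item For $t>0$, if $\mu_t(x)>0$ then $\mu_s(x)>0$ for every $s\ge t$, since the chain can stay at $x$. So $\{t>0:\mu_t(x)>0\}$ is an up-set. By real-analyticity of $t\mapsto\mu_t(x)$ it is either empty or all of $(0,\infty)$. Hence the support $S$ of $\mu_t$ is the same for every $t>0$.
\item If $x\in S$ and $K(x,y)>0$, then $\mu_t(y)\ge\mu_{t-h}(x)P_h(x,y)>0$ for small $h>0$, since $P_h(x,y)=hK(x,y)+o(h)$. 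So $S$ is closed under the jumps of $K$.
\end{itemize}
Consequently the forward equation restricts exactly to $S$, and $\log f_t(y)$ is finite whenever it carries positive weight. The ``$-\infty$ in the extended sense'' case you worried about never occurs.
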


\begin{proof}[Proof of Proposition~\ref{thinning}]
	Let $\mu_t^{n,m}=\mathbb P (Y_t^{n,m} \in \cdot)$. For each $\lambda >0$ let $\nu^{n,m,\lambda}$ be the product measure on $\Omega_{n,m}$ defined by
	\begin{flalign*}
		\nu^{n,m,\lambda} ({ x}):=\prod_{i=1}^{n} \left\{ \mathbbm 1_{\{ x_i <m   \}}\,e^{-\lambda} \frac{\lambda^{x_i}}{x_i!}   + \mathbbm 1_{\{ x_i =m   \}} \left( 1 - \,e^{-\lambda} \sum_{j=0}^{m-1}\frac{\lambda^{j}}{j!} \right) \right\},\,\,\forall\, {x}=(x_1,\ldots,x_n) \in \Omega_{n,m}.
	\end{flalign*}
	Let $\nu_0^{n,m}$ be the Dirac measure on concentrated in the null vector $(0,\ldots,0) \in \Omega_{n,m}$. For each $t > 0$ define $\nu_t^{n,m}:=\nu^{n,m,(t/n)}$. Let $\mathfrak L_{n,m}^{\ast,\nu_t^{n,m}}$ be the adjoint of $\mathfrak L_{n,m}$ with respect to the inner product defined by $\< f,g\>_{\nu_t^{n,m}}=\int f \cdot g\, d\nu_t^{n,m}$ for every $f,g: \Omega_{n,m} \to \mathbb R$.
	Let $f_t^{n,m}$ be the Radon–Nikodym derivative of $\mu_t^{n,m}$ with respect to $\nu_t^{n,m}$. Let $\psi_t^{n,m}$ be the Radon–Nikodym derivative of $\nu_t^{n,m}$ with respect to $\nu^{n,m,1}$. By Proposition~\ref{yau}, for every $t > 0$ we have
	\begin{flalign}
		\frac{d}{dt} 	D_{\text{KL}} (\mu_t^{n,m} || \nu_t^{n,m}) &\leq  \mathbb E \left[ \mathfrak L_{n,m}^{\ast,\nu_t^{n,m}} \mathbbm 1 (Y_t^{n,m}) - \frac{d}{dt} \log \psi_t^{n,m} (Y_t^{n,m}) \right]  \label{yau 1}
	\end{flalign}
	
	To compute the expectation on the right-hand side above, for each ${ x}=(x_1,\ldots,x_n) \in \Omega_{n,m}$ let ${ x}^{i,-}\in \mathbb R^n$ be the vector obtained from ${ x}$ after subtracting $1$ in the $i$-th coordinate, that is,
	\begin{flalign*}
		x^{i,-}_k=\begin{cases}
			x_k - 1 &, \text{ if } k=i,\\
			x_k &, \text{ if } k \in \{1,2,\ldots,n\}\setminus\{i\}.
		\end{cases}
	\end{flalign*}
	Let ${ x}=(x_1,\ldots,x_n) \in \Omega_{n,m}$. By identity (\ref{adjoint}), for every $t>0$ we have
	\begin{flalign}
		\mathfrak L_{n,m}^{\ast,\nu_t^{n,m}} \mathbbm 1 ({ x}) 
		&= \frac{1}{n} \sum_{i=1}^n \left\{  \mathbbm 1_{\{  x_i > 0  \}} \,\frac{\nu_t^{n,m}({ x}^{i,-})}{\nu_t^{n,m}({ x})} - \mathbbm 1_{\{ x_i < m   \}} \right\} \nonumber \\
		&= \frac{1}{n} \sum_{i=1}^n \left\{  \mathbbm 1_{\{  0<x_i <m \}} \,\frac{\nu_t^{n,m}({ x}^{i,-})}{\nu_t^{n,m}({x})} + \mathbbm 1_{\{  x_i =m  \}} \,\frac{\nu_t^{n,m}({ x}^{i,-})}{\nu_t^{n,m}({ x})} - \mathbbm 1_{\{ x_i < m   \}} \right\} \nonumber  \\
		&= \frac{1}{n} \sum_{i=1}^n \left\{  \mathbbm 1_{\{  0<x_i <m \}} \,\frac{e^{-t/n}\frac{(t/n)^{x_i-1}}{(x_i-1)!}}{e^{-t/n}\frac{(t/n)^{x_i}}{x_i!}} + \mathbbm 1_{\{  x_i =m  \}} \,\frac{e^{-t/n}\frac{(t/n)^{m-1}}{(m-1)!}}{1 - \,e^{-t/n} \sum_{j=0}^{m-1}\frac{(t/n)^{j}}{j!}} - \mathbbm 1_{\{ x_i < m   \}} \right\}  \nonumber \\
		&= \frac{1}{n} \sum_{i=1}^n \left\{  \mathbbm 1_{\{  0<x_i <m \}} \,\frac{x_i}{(t/n)} + \mathbbm 1_{\{  x_i =m  \}} \,\frac{e^{-t/n}\frac{(t/n)^{m-1}}{(m-1)!}}{1 - \,e^{-t/n} \sum_{j=0}^{m-1}\frac{(t/n)^{j}}{j!}} - \mathbbm 1_{\{ x_i < m   \}} \right\} \nonumber  \\
		&=  \sum_{i=1}^n \mathbbm 1_{\{ x_i <m   \}}  \left(  \frac{x_i}{t} - \frac{1}{n} \right)  + \mathbbm 1_{\{ x_i =m   \}}  \left( \frac{e^{-t/n}\,(t/n)^{m-1}}{n\,(m-1)!\,\left(1 - \,e^{-t/n} \sum_{j=0}^{m-1}\frac{(t/n)^{j}}{j!}\right)} \right). \label{adjoint1}
	\end{flalign}
	Furthermore, observe that for every $t > 0$ we have
	\begin{flalign}
		\frac{d}{dt} \log \psi_t^{n,m} ({ x})& = \frac{d}{dt} \log \,\prod_{i=1}^{n}  \left\{ \mathbbm 1_{\{ x_i <m   \}}\,e^{-t/n} \frac{(t/n)^{x_i}}{x_i!}   + \mathbbm 1_{\{ x_i =m   \}} \,\left( 1 - \,e^{-t/n} \sum_{j=0}^{m-1}\frac{(t/n)^{j}}{j!} \right) \right\} \nonumber \\
		&=\frac{d}{dt}  \,\sum_{i=1}^{n} \log  \left\{ \mathbbm 1_{\{ x_i <m   \}}\,e^{-t/n} \frac{(t/n)^{x_i}}{x_i!}   + \mathbbm 1_{\{ x_i =m   \}} \,\left( 1 - \,e^{-t/n} \sum_{j=0}^{m-1}\frac{(t/n)^{j}}{j!} \right) \right\} \nonumber  \\
		&=\frac{d}{dt}  \,\sum_{i=1}^{n}   \mathbbm 1_{\{ x_i <m   \}} \log \left(  e^{-t/n} \frac{(t/n)^{x_i}}{x_i!} \right)  + \mathbbm 1_{\{ x_i =m   \}} \log \left( 1 - \,e^{-t/n} \sum_{j=0}^{m-1}\frac{(t/n)^{j}}{j!} \right) \nonumber \\
		&= \frac{d}{dt}  \,\sum_{i=1}^{n}   \mathbbm 1_{\{ x_i <m   \}}  \left( - \frac{t}{n} +  x_i \log \left(\frac{t}{n}\right) - \log(x_i!) \right)  + \mathbbm 1_{\{ x_i =m   \}} \log \left( 1 - \,e^{-t/n} \sum_{j=0}^{m-1}\frac{(t/n)^{j}}{j!} \right) \nonumber \\
		&= \sum_{i=1}^{n}   \mathbbm 1_{\{ x_i <m   \}}  \left(  \frac{x_i}{t} - \frac{1}{n} \right)  + \mathbbm 1_{\{ x_i =m   \}} \left( \frac{e^{-t/n}\,(t/n)^{m-1}}{n\,(m-1)!\,\left(1 - \,e^{-t/n} \sum_{j=0}^{m-1}\frac{(t/n)^{j}}{j!}\right)} \right).\label{log psi}
	\end{flalign}
	Therefore, by (\ref{adjoint1}) and (\ref{log psi}), $\mathfrak L_{n,m}^{\ast,\nu_t^{n,m}} \mathbbm 1 ({ x})- \frac{d}{dt} \log \psi_t^{n,m} ({ x}) =0$ for every ${ x} \in \Omega_{n,m}$. Hence, by inequality (\ref{yau 1}) we conclude that 
	\begin{flalign*}
		\frac{d}{dt} 	D_{\text{KL}} (\mu_t^{n,m} || \nu_t^{n,m}) &\leq 0,\, \,\forall t > 0.
	\end{flalign*} 
	Since $D_{\text{KL}} (\mu_0^{n,m} || \nu_0^{n,m})=0$, we conclude that $	D_{\text{KL}} (\mu_t^{n,m} || \nu_t^{n,m})=0$ for every $t \geq 0$. Hence, $\mu_t^{n,m}= \nu_t^{n,m}$. 
\end{proof}

\vspace{1cm}

\noindent 
\textbf{Acknowledgments.} M.E.C. thanks the Conselho Nacional de Desenvolvimento Científico e Tecnológico (CNPq) through the Programa Institucional de Bolsas de Iniciação Científica (PIBIC), grant 185360/2026-2, and the Fundação de Amparo à Pesquisa do Estado do Rio Grande do Sul (FAPERGS) for the financial support through the Programa Institucional de Bolsas de Iniciação Científica (PROBIC), grant 25/2551-0000730-5. R.M. thanks the Fundação de Amparo à Pesquisa do Estado do Rio Grande do Sul (FAPERGS) for the financial support through the Auxílio Recém Contratado (ARC), grant 25/2551-0000934-0. The authors thank Eldon Barros, Milton Jara and Otávio Menezes, for their availability in many discussions while this project was being developed.


\begin{thebibliography}{99}

\bibitem{papanicolaou2} Doumas, A.V., and Papanicolaou, V.G. \textit{The coupon collector’s problem revisited: generalizing the double dixie cup problem of Newman and Shepp.} ESAIM: Probability and Statistics 20 (2016): 367--399.


\bibitem{erdos} Erdős, P., and Rényi, A. \textit{On a classical problem of probability theory.} A Magyar Tudományos Akadémia Matematikai Kutató Intézetének Közleményei 6.1-2 (1961): 215--220.

\bibitem{holst} Holst, L. \textit{On birthday, collectors', occupancy and other classical urn problems.} International Statistical Review/Revue Internationale de Statistique (1986): 15--27.

\bibitem{ilienko} Ilienko, A. \textit{Convergence of point processes associated with coupon collector’s and Dixie cup problems.} (2019): 1--9.

\bibitem{main lemma} Jara, M., and Menezes, O. \textit{Non-equilibrium fluctuations of interacting particle systems.} arXiv preprint arXiv:1810.09526 (2018).

\bibitem{laplace} Laplace, P. S. \textit{Théorie analytique des probabilités.} Vol. 7. Paris: Courcier, 1820.

\bibitem{peres} Levin, D.A., and Peres,Y. \textit{Markov chains and mixing times.} American Mathematical Society, 2026.

\bibitem{long} Long, C. D. \textit{Terminal Defects, Growing Multiplicity, and Variance Extremality in the Double Dixie Cup Problem.} arXiv preprint arXiv:2604.25108 (2026).


\bibitem{newman} Newman, D. J., and Shepp, L. \textit{The double dixie cup problem.} The American Mathematical Monthly 67.1 (1960): 58--61.

\bibitem{yau} Yau, H. T. \textit{Relative entropy and hydrodynamics of Ginzburg-Landau models.} Letters in Mathematical Physics, Volume  22 no. 1 (1991), 63--80.

\end{thebibliography}
\end{document}